\documentclass[10pt,reqno]{amsart}
\usepackage{amssymb}
\usepackage{amsmath}
\usepackage{amsthm}
\usepackage{mathtools}
\usepackage{graphicx}

\usepackage{cite} 

\usepackage{subfigure}

\usepackage{caption}

\usepackage{bm}

\usepackage{enumitem}

\usepackage[colorlinks=true,urlcolor=blue,
citecolor=red,linkcolor=blue,linktocpage,pdfpagelabels,
bookmarksnumbered,bookmarksopen]{hyperref}



\usepackage{ifthen} 

\provideboolean{shownotes} 
\setboolean{shownotes}{true} 
\newcommand{\margnote}[1]{
\ifthenelse{\boolean{shownotes}}%
{\marginpar{\raggedright\tiny\texttt{#1}}}%
{}%
}

\newcommand{\hole}[1]{
\ifthenelse{\boolean{shownotes}}%
{\begin{center} \fbox{ \rule {.25cm}{0cm}
\rule[-.1cm]{0cm}{.4cm} \parbox{.85\textwidth}{\begin{center}
\texttt{#1}\end{center}} \rule {.25cm}{0cm}}\end{center}}
{}
}

\usepackage[titletoc,title]{appendix}

\numberwithin{equation}{section} 

\newcounter{cont}[section]

\newtheorem{theorem}[cont]{Theorem}
\newtheorem{proposition}[cont]{Proposition}

\newtheorem{corollary}[cont]{Corollary}
\newtheorem{lemma}[cont]{Lemma}
\newtheorem*{mtheorem}{Theorem}

\theoremstyle{definition}
\newtheorem{remark}[cont]{Remark}
\newtheorem{definition}[cont]{Definition}

 \theoremstyle{remark}

\renewcommand{\Re}{\mathrm{Re}\,} 
\renewcommand{\Im}{\mathrm{Im}\,}

\newcommand{\e}{\varepsilon}
\newcommand{\ep}{\epsilon}
\newcommand{\vep}{\varepsilon}

\newcommand{\N}{\mathbb{N}}
\newcommand{\R}{\mathbb{R}}
\newcommand{\C}{\mathbb{C}}

\newcommand{\cL}{{\mathcal{L}}}

\newcommand{\cM}{{\mathcal{M}}}
\newcommand{\cD}{{\mathcal{D}}}
\newcommand{\cA}{{\mathcal{A}}}

\newcommand{\ess}{\sigma_\mathrm{\tiny{ess}}}
\newcommand{\ptsp}{\sigma_\mathrm{\tiny{pt}}}


\begin{document}

\title[Stability of weak dispersive shock profiles for QHD]{Spectral stability of weak dispersive shock profiles for quantum hydrodynamics with nonlinear viscosity}

\author[R. Folino]{Raffaele Folino}

\address[R. Folino]{Departamento de Matem\'aticas y Mec\'anica\\Instituto de 
Investigaciones en Matem\'aticas Aplicadas y en Sistemas\\Universidad Nacional Aut\'onoma de 
M\'exico\\Circuito Escolar s/n, Ciudad Universitaria C.P. 04510 Cd. Mx. (Mexico)}

\email{folino@mym.iimas.unam.mx}

\author[R. G. Plaza]{Ram\'on G. Plaza}

\address[R. G. Plaza]{Departamento de Matem\'aticas y Mec\'anica\\Instituto de 
Investigaciones en Matem\'aticas Aplicadas y en Sistemas\\Universidad Nacional Aut\'onoma de 
M\'exico\\Circuito Escolar s/n, Ciudad Universitaria C.P. 04510 Cd. Mx. (Mexico)}

\email{plaza@mym.iimas.unam.mx}

\author[D. Zhelyazov]{Delyan Zhelyazov}

\address[D. Zhelyazov]{Departamento de Matem\'aticas y Mec\'anica\\Instituto de 
Investigaciones en Matem\'aticas Aplicadas y en Sistemas\\Universidad Nacional Aut\'onoma de 
M\'exico\\Circuito Escolar s/n, Ciudad Universitaria C.P. 04510 Cd. Mx. (Mexico)}

\email{delyan.zhelyazov@iimas.unam.mx}

\keywords{Dispersive shocks; quantum hydrodynamics; spectral stability; energy estimates}

\subjclass[2010]{76Y05, 35Q35, 35B35, 35P15}


\begin{abstract} 
This paper studies the stability of weak dispersive shock profiles for a quantum hydrodynamics system in one space dimension with nonlinear viscosity and dispersive (quantum) effects due to a Bohm potential. It is shown that, if the shock amplitude is sufficiently small, then the profiles are spectrally stable. This analytical result is consistent with numerical estimations of the location of the spectrum \cite{LaZ21b}. The proof is based on energy estimates at the spectral level, on the choice of an appropriate weighted energy function for the perturbations involving both the dispersive potential and the nonlinear viscosity, and on the montonicity of the dipersive profiles in the small-amplitude regime.
\end{abstract}


\maketitle

\section{Introduction}\label{sec:intro}

Consider the following one-dimensional quantum hydrodynamics (QHD) system with nonlinear viscosity in Eulerian coordinates:
\begin{equation}
\label{QHD-E}
\begin{aligned}
\rho_t + m_x &= 0,   \\
m_t + \Big( \frac{m^2}{\rho} + p(\rho) \Big)_x &= \ep \mu \rho \Big( \frac{m_x}{\rho}\Big)_x + \ep^2 k^2 \rho \left( \frac{(\sqrt{\rho})_{xx}}{\sqrt{\rho}}\right)_x,\\       
\end{aligned} 
\end{equation}
with $x \in \R$ and $t>0 $ denoting space and time, respectively, and where the unknown scalar functions $\rho= \rho(x,t) > 0$ 
and $m= m(x,t) = \rho(x,t) u(x,t)$ denote the density and momentum fields, respectively, where $u = u(x,t)$ is the velocity. 
Here, $p=p(\rho)$ denotes the pressure function, also known as equation of state. 
It will be assumed that $p(\rho)=\rho^{\gamma}$, with constant $\gamma \geq 1$. The constants $0 < \ep \ll1$, $\mu > 0$ and $k > 0$ determine the viscosity, 
$\ep \mu$, and dispersion (or capillarity), $\ep^2 k^2$, coefficients, respectively. 
The dispersive third order term corresponds to a quantum Bohm potential \cite{Boh52a,Boh52b}, 
whereas the nonlinear viscosity term is motivated by the theory of superfluidity (see Khalatnikov \cite{Khlt89}). 

Systems of the form \eqref{QHD-E} appear in many areas of physics to describe a compressible, viscous fluid where quantum effects appear at a macroscopic scale. This is the case, for example, in phenomena like Bose-Einstein condensation \cite{DGPS99,GrantJ73}, the theory of superfluidity \cite{Khlt89,Land41,LanLif6}, or in the (hydrodynamical) modeling of semiconductor devices at nanoscales \cite{GarC94,FrZh93}. The form of the dispersive term appearing in \eqref{QHD-E} comes from considering a normalized quantum Bohm potential, $\tfrac{1}{2}\ep^2 (\sqrt{\rho})_{xx}/\sqrt{\rho}$ (cf. \cite{Boh52a,Boh52b}), and represents a quantum correction to the classical pressure. It provides the model with a nonlinear third order dispersive term. The nonlinear viscosity term chosen here is motivated by the theory of superfluidity (cf. \cite{Khlt89}, p. 109; see also the discussion in \cite{JuMi07}) and describes the interactions between superfluid flow (without any loss of kinetic energy) and normal flow. It can also be interpreted as the interaction of the fluid with a background (see Landau and Lifshitz \cite{LanLif6} and Khalatnikov \cite{Khlt89} for more information). The constants $\mu$ and $k$ are assumed to be of order $O(1)$ with respect to the physical constant $\ep > 0$. The latter is proportional to the reduced Planck constant, $\hbar > 0$, and it is assumed to be fixed and small, $0 < \ep \ll 1$.

In the context of QHD models, an important field of study is related to the emergence of shock waves dominated by dispersion rather than dissipation. They are known as \emph{dispersive shocks}. For instance, purely dispersive shocks (without viscosity) were first studied in \cite{GuPi74,Sgdv64} (see also \cite{Gas01,HACCES06,HoAb07} for later developments). Motivated by classical fluid theory, a new approach focuses on the interaction between viscosity and dispersion (see, e.g., \cite{DiMu17,GMOB22,Zhel-preprint}), where viscous-dispersive shocks play a fundamental role. In recent works, the third author and collaborators have studied the existence and stability properties of viscous-dispersive shock profiles for QHD systems, first for the case of \emph{linear} viscosity (see \cite{LMZ20a,LMZ20b}) and, more recently, for the system \eqref{QHD-E} with nonlinear viscosity under consideration here (see \cite{LaZ21b,LaZ21a}). For instance, system \eqref{QHD-E} can be recast in conservation form in terms of the conserved quantities $\rho$ and $u$ (see \cite{LaZ21a,LaZ21b} or Section \ref{secconserv} below). Dispersive shock profiles are traveling wave solutions to \eqref{QHD-E} of the form 
\[
\rho(x,t)=R \Big( \frac{x-st}{\ep} \Big),\qquad \qquad u(t,x)=U\Big( \frac{x-st}{\ep} \Big),
\]
where $s \in \R$ is the shock speed, and for which there exist asymptotic limits
\[
	R^{\pm}=\lim_{\xi \rightarrow \pm \infty}R(\xi), \qquad \qquad  U^{\pm}=\lim_{\xi \rightarrow \pm \infty}U(\xi).
\]
The triplet $(R^\pm, U^\pm,s)$ defines a classical (Lax) shock of the underlying hyperbolic system in the absence of viscosity and dispersion (see Section \ref{secshocks} below). The authors of \cite{LaZ21b,LaZ21a} pay attention to the interplay of the dispersive and the viscosity terms and to its effects on the existence and the structural properties of these shock profiles. 

One of the most important properties of these traveling fronts is their \emph{stability} as solutions to the viscous-dispersive system \eqref{QHD-E} under small perturbations. Before defining what we understand as spectral stability, it is worth mentioning that the results and the methodology for viscous dispersive shocks are strongly influenced by the stability theory of purely viscous shock profiles, which we now briefly review. The literature on the stability of viscous shocks is vast and dates back to the work by Il’in and Ole\u{\i}nik \cite{IO64} in the scalar case (see also Sattinger \cite{Sat76}). A fundamental advance in the theory was the use of energy methods to study general systems with parabolic viscosity by Goodman \cite{Go86}, and independently by Matstumura and Nishihara \cite{MN85} for the one-dimensional compress­ible viscous gas system. Their results hold for weak shock profiles and under the assumption of zero mass perturba­tions. Liu \cite{L85} later extended Goodman's results to non-zero mass perturbations via a characteristic-energy method, and introduced the diffusion waves transporting the mass. Szepessy and Xin \cite{SX93} provided the first complete stability result for viscous shocks by relaxing the stringent conditions on the initial perturbations required in \cite{Go86,MN85}. Around the mid-1990’s, Liu \cite{L97} developed the technique of using pointwise bounds on the Green's function of the linearized operator around the wave and proved nonlinear stability for genuinely non-linear weak Lax shocks with artificial viscosity.

The seminal paper of Alexander, Gardner and Jones \cite{AGJ90} contained the basis of a dynamical systems approach (known as \emph{Evans function meth­ods}) to the stability of trav­eling waves; see also \cite{San02,KaPro13} for comprehensive reviews. However, the accumulation of essential spectrum around the eigenvalue zero prevented a direct application of Evans function techniques to viscous shocks. This difficulty was overcome by Gard­ner and Zumbrun \cite{GZ98} who proved a technical result known as the \emph{Gap lemma} (see also \cite{KS98}), which allows to define the Evans function near the essential spectrum. Since then, a theory developed mainly by K. Zumbrun and collaborators (see, e.g. \cite{H99,ZH98,MZ04a,MZ04b,MZ03,Z01,Z04,Z07} and the many references therein) has combined pointwise Green's function bounds with Evans function tools to reduce the nonlinear stability problem of shock profiles to the verification of the strong spectral stability for the linearized operator around the wave. In other words, the theory, which applies to several general classes of shocks, shows that spectral stability of the linearized operator implies nonlinear stability. As far as we know, the (now fundamental) spectral stability condition has been verified only for sufficiently weak general viscous and relaxation shock profiles (see the references \cite{HuZ02,MZ09} which apply the energy method, as well as \cite{PZ04,FSz02,FSz10} for different approaches based on dynamical systems tools), or for large-amplitude shocks in very specific systems, either analytically, using \emph{ad hoc} energy estimates particular to the system (cf. \cite{MN85}), or by numerical estimations (see \cite{HLyZ09,HLZ10}). In sum, the spectral stability condition has established itself as a fundamental property for viscous shock profiles; its verification is far from trivial and, in particular, in the case of large-amplitude shocks, it has been quite elusive from a theoretical viewpoint.

In the case of shock profiles for systems exhibiting viscosity and capillarity, the theory is much less developed. The only known stability results pertain to scalar equations \cite{PaW04,HZ00}, isentropic fluid dynamics with constant capillarity \cite{Hu09}, or with constant capillarity and constant viscosity \cite{KhdPhD89,ZLY16,Z00}. The analysis by Humpherys \cite{Hu09} applies spectral energy methods (those are, energy estimates at the level of the spectral equations) to prove the spectral stability of sufficiently small amplitude shocks. In this work, the proof is based on the monotonicity of the profiles and on finding a particular energy for this system. The analysis by Zumbrun \cite{Z00} warrants note as the only result, up to our knowledge, for \emph{large amplitude} shocks exhibiting viscosity and capillarity, again obtained by energy estimates special to the model, which establishes the stability of stationary phase-transitional large amplitude shocks of an isentropic viscous-capillary van der Waals model introduced by Slemrod \cite{Sl84a}. From a theoretical perspective, the corresponding pointwise Green's function bound method, which establishes nonlinear stability from assuming the spectral stability condition, has been developed only for \emph{scalar} viscous-dispersive shocks (see Howard and Zumbrun \cite{HZ00}). As far as we know, the systems case is still open. 

In the particular case of QHD models, the third author and collaborators have studied not only the existence but also the stability properties of viscous-dispersive shock profile solutions to system \eqref{QHD-E} with nonlinear viscosity (see \cite{LaZ21b,LaZ21a}) and to a related model with a linear (non-physical) viscosity term as well (cf. \cite{LMZ20a,LMZ20b}). The authors consider both monotone (small-amplitude) and oscillatory (larger amplitude) shock profiles. In these works, the authors provide numerical evidence, based on Evans function calculations, of the spectral stability of the profiles, independently of their shock strength. Finally, in a recent contribution \cite{FPZ-press}, we were able to prove analytically the spectral stability of dispersive shock profiles for the model with linear viscosity when the shock amplitude is sufficiently small. 

The purpose of this paper is to prove analytically that sufficiently weak dispersive shock profiles for the QHD model with nonlinear viscosity \eqref{QHD-E} are spectrally stable. In other words, our objective is to provide an analytical proof of the conjecture by Lattanzio and Zhelyazov \cite{LaZ21b} that dispersive shock profiles for model \eqref{QHD-E} are spectrally stable in the small-amplitude regime. Following \cite{LaZ21b}, we ought to define the linearized operator around the wave. Indeed, upon linearization of the system of equations \eqref{QHD-E} around the profile function we arrive at a natural spectral problem of the form
\[
\cL \begin{pmatrix}\hat{\rho}\\ \hat{u}\end{pmatrix} = \lambda \begin{pmatrix}\hat{\rho}\\ \hat{u}\end{pmatrix},
\]
where
\[
\cL\begin{pmatrix}\hat{\rho}\\ \hat{u}\end{pmatrix}
	\\:=\begin{pmatrix}
	s \hat{\rho}'  -(R \hat{u} + U \hat{\rho})'\\ $\,$ \\
	s \hat{u}' - (U \hat{u})' - (h'(R) \hat{\rho})' + \mu (R^{-1}(R \hat{u} + U \hat{\rho})')' - \mu (R^{-2} (R U)' \hat{\rho})' +\\
  	+ \tfrac{k^2}{2}(R^{-\frac{1}{2}}(R^{-\frac{1}{2}} \hat{\rho})'')' - \tfrac{k^2}{2}(R^{-\frac{3}{2}}(R^{\frac{1}{2}})'' \hat{\rho})'
	\end{pmatrix}
\]
is the linearized operator around both the density, $R= R(\xi)$, and the velocity, $U = U(\xi)$, profiles (for details, see \cite{LaZ21b} or Section \ref{secspectral} below). Here $' = d/d\xi$ denotes differentiation with respect to the Galilean variable of translation, $\xi = (x-st)/\ep$, and the variables $(\hat{\rho}, \hat{u})$ denote perturbations of the profile solution $(R,U)$. This operator acts on the space $L^2(\R) \times L^2(\R)$ of finite energy, localized perturbations. In lay terms and just like in the case of purely viscous shocks, spectral stability is the property that there is no spectrum of the linearized operator around the profile intersecting the complex half plane of complex values with positive real part, precluding the existence of solutions to the linearized evolution equations of the form $e^{\lambda t} (\hat{\rho}(\xi),\hat{u}(\xi))$ and with an explosive behaviour in time. Hence, our main result can be stated as follows (for its precise statement, see Theorem \ref{mainthm} below).

\begin{mtheorem}
Assume that $\mu, k, \ep > 0$ and $\gamma > 1$ are given. Suppose that the triplet $(R^\pm, U^\pm,s)$, with $R^\pm > 0$, defines a classical (Lax) shock of the underlying hyperbolic system in the absence of viscosity and dispersion (see system \eqref{eq:hyperbolic} below) and that $s \in (0, \bar{s})$, where the bound $\bar{s} > 0$ is defined in \eqref{sbound}. If the shock amplitude is sufficiently small, $|(R^+,U^+)-(R-,U^-)| \ll 1$, then the $L^2$-spectrum of the linearized operator $\cL$ is contained in the stable complex half plane, that is,
\[
\sigma(\cL)_{|L^2} \subset \{ \lambda \in \C \, : \, \Re \lambda < 0 \} \cup \{ 0 \}.
\]

\end{mtheorem}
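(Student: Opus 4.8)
The plan is to follow the spectral-energy method in the spirit of Humpherys \cite{Hu09} and of our earlier work \cite{FPZ-press} for the linear-viscosity case. First I would pass to a convenient form of the spectral problem: using the first (continuity) equation of $\cL$, namely $s\hat\rho' - (R\hat u + U\hat\rho)' = \lambda\hat\rho$, one can integrate and express $\hat u$ (or a suitable flux variable) in terms of $\hat\rho$ and $\lambda$, reducing the system to a single higher-order scalar equation for $\hat\rho$, with coefficients depending on the profile $(R,U)$ and on $\lambda$. Since the profile is monotone in the small-amplitude regime (this is the structural input we are allowed to assume from Section \ref{secshocks} and the cited existence results), the quantities $R'$, $U'$ have a definite sign and $R$ stays in a small neighbourhood of the endpoint values, so all profile-dependent coefficients are close to constants.

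Second, I would construct a weighted energy functional. The key, as emphasized in the abstract, is to choose a weight that simultaneously accommodates the nonlinear viscosity term $\mu(R^{-1}(R\hat u+U\hat\rho)')'-\mu(R^{-2}(RU)'\hat\rho)'$ and the dispersive (Bohm) term $\tfrac{k^2}{2}(R^{-1/2}(R^{-1/2}\hat\rho)'')' - \tfrac{k^2}{2}(R^{-3/2}(R^{1/2})''\hat\rho)'$. Concretely, I would take an inner product of the scalar spectral equation against $w\,\overline{\hat\rho}$ (and, for the dispersive contributions, against a weighted derivative of $\hat\rho$), where $w=w(\xi)>0$ is a profile-dependent weight to be chosen — a natural first guess being a power of $R$ combined with an exponential factor tied to the characteristic speeds, mimicking the weight used in \cite{Go86,Hu09}. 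Integrating by parts and taking real parts, one obtains an identity of the schematic form $\Re\lambda\cdot(\text{positive quadratic in }\hat\rho) = -(\text{dissipation terms from }\mu) - (\text{terms from dispersion and from }\lambda\text{-dependent lower-order pieces})$. The dispersive third-order term is, by itself, conservative (skew-symmetric to leading order), so it contributes no definite sign; the task is to show that its interaction with the viscous dissipation and with the weight does not destroy coercivity. This is where smallness of the amplitude enters: the "bad" commutator terms carry factors of $R'$, $U'$, or $(R^{1/2})''$, all of which are $O(|(R^+,U^+)-(R^-,U^-)|)$ uniformly in $\xi$, and hence are absorbed by the leading viscous dissipation $\mu\int w R^{-1}|\hat u'|^2$ (equivalently a term controlling $|\hat\rho''|^2$ or $|\hat\rho'|^2$ after the reduction) once the amplitude is small enough.

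Third, I would close the estimate: combining the identity with the sign information, for $\Re\lambda\ge 0$ the right-hand side is $\le 0$ while the left-hand side is $\ge 0$, forcing both to vanish; coercivity of the quadratic form in $\hat\rho$ then gives $\hat\rho\equiv 0$ (hence $\hat u\equiv 0$), which rules out eigenvalues with $\Re\lambda\ge 0$. To upgrade this from a statement about point spectrum to one about the full $L^2$-spectrum, I would separately (i) locate the essential spectrum by computing the dispersion relations of the constant-coefficient limiting operators at $\xi\to\pm\infty$, using the Lax shock condition and the restriction $s\in(0,\bar s)$ to check that $\ess(\cL)$ touches the imaginary axis only at the origin and otherwise lies in $\{\Re\lambda<0\}$, and (ii) handle the boundary point $\Re\lambda=0$, $\lambda\ne 0$, either by the same energy identity (the dissipation terms are strictly negative there unless $\hat\rho\equiv0$) or by a limiting/density argument, together with the fact that on $\{\Re\lambda\ge 0\}\setminus\{0\}$ the resolvent exists wherever there are no eigenvalues and the essential spectrum is absent.

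The main obstacle I anticipate is the construction of the right weight $w$ and the bookkeeping of the dispersive term: unlike the purely viscous case, the third-order Bohm term produces, after integration by parts against a weighted test function, several boundary-commutator terms mixing $\hat\rho$, $\hat\rho'$, $\hat\rho''$ with derivatives of the weight and of $R^{\pm1/2}$, and one must verify that the weight can be chosen so that the genuinely indefinite pieces either cancel by skew-symmetry or are of size $O(\text{amplitude})$ relative to the retained viscous dissipation. Getting the viscous dissipation to control $\hat\rho''$ (needed because the dispersion is third order) rather than merely $\hat\rho'$ — presumably by exploiting the specific algebraic structure $R^{-1}(R\hat u+U\hat\rho)'$ of the nonlinear viscosity after the scalar reduction — is the delicate point, and is precisely the place where the particular form of the nonlinear viscosity in \eqref{QHD-E} (as opposed to a generic one) should be used.
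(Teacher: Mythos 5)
Your outline captures the general spirit (integrated/flux variables, a weighted spectral energy estimate, smallness of the amplitude, separate computation of the essential spectrum via the asymptotic operators), but it leaves unresolved precisely the steps that constitute the actual proof, and several of its guiding claims point in directions that would not close. First, the paper does not reduce to a single $\lambda$-dependent scalar equation for $\hat\rho$: it integrates both equations (Goodman) and then applies the invertible map $\Gamma$ of \eqref{defGama}, keeping the \emph{system} \eqref{eq:rho-final}--\eqref{eq:v-final} in $(\rho,v)$ with $v=U\rho+Ru$; your scalar reduction would produce a quadratic pencil in $\lambda$ whose treatment you do not address. Second, the weight is not ``a power of $R$ times an exponential'': the estimate is obtained by pairing \eqref{eq:v-final} with $v^*/f_1$, where $f_1=Rh'(R)-U^2$ is positive exactly because of subsonicity, and the resulting energy is $\|\rho\|_{L^2}^2+\|v\|_{L^2_w}^2+\tfrac{k^2}{2}\|\rho'\|_{L^2_w}^2$ with $w=1/\sqrt{f_1}$ (Remark \ref{remkeyenergy}). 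Third, your claim that the Bohm term is sign-neutral and that all bad commutators are absorbed by the viscous dissipation is where the argument would fail: the good terms produced by monotonicity are \emph{degenerate}, of size $|R'(x)|=O(\e^2)$ and decaying at spatial infinity, so $O(\e)$ smallness of the bad terms is not enough; one needs the refined bounds $|R''|,|g'|,|f_1''|\le C\e|R'|$ of Proposition \ref{lem:R-prop} and Lemma \ref{lem:f_1,g}, absorption of the $\rho$- and $\rho'$-terms into $|R'|$-weighted good terms (only the $v'$-terms go into $\mu\int|v'|^2/f_1$), and, crucially, the dispersive operator itself must \emph{generate} coercivity in $\rho'$: after substituting the differentiated $\rho$-equation one obtains $(\Re\lambda)\tfrac{k^2}{2}\int|\rho'|^2/f_1$ plus $\tfrac{k^2}{2}\int\omega|\rho'|^2$ with $\omega$ as in \eqref{eq:f_a3}.

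This last point also explains the bound $\bar s$ in \eqref{sbound}, which your proposal never accounts for beyond the essential spectrum: positivity of $\omega$ (Lemma \ref{lemdest}) holds only for $s\in(0,\kappa(\gamma)c_s(R^-))$ with $\kappa(\gamma)$ as in \eqref{eq:F}, and together with the Goodman-type terms controlled in Lemma \ref{lemma:choice} (which also fixes the Young parameter $\eta$) this is what yields the final inequality \eqref{EE} and the contradiction for $\Re\lambda\ge0$, $\lambda\ne0$. Without identifying the transformation to $(\rho,v)$, the weight $1/f_1$, the $|R'|$-weighted good terms, and the $\omega$-term with its speed restriction, the proposal remains a program rather than a proof; the smallness of the amplitude alone, fed only into the viscous dissipation, cannot absorb the hyperbolic and dispersive commutators.
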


A few remarks are in order. We prove that, in the small-amplitude (or weak shock) regime, profiles are monotone. This is a feature that is shared with purely viscous shock profiles. Our analysis exploits this property, as well as the smallness assumption on the shock amplitude, to establish energy estimates at the spectral level. For that purpose, we reformulate the problem in terms of new perturbation variables and transform the spectral problem into an equivalent one for a modified operator. The applied transformation is the composition of integration \cite{Go86,Go91}, a well-known technique to provide better energy estimates that profit from the compressivity of the shock, and a linear and invertible change of variables with the profiles as coefficients. Hence, as a result of the energy estimates, we identify an appropriate (and novel) weighted energy, defined in terms of the Bohmian potential and of the nonlinear viscosity, which allows us to conclude stability (see Remark \ref{remkeyenergy} below). Once again, the performed energy estimates, as well as the identification of the decaying energy, are convoluted (due to the nonlinearity of both viscosity and dispersion) and particular to the system under consideration. The condition on the shock speed, namely $s \in (0,\bar{s})$, not only guarantees that both end states are \emph{subsonic}, but also represents a restriction on the shock speed to conclude stability (see Remark \ref{remnonlineargood} for a discussion). Just like in the purely viscous case, there is accumulation of the of the essential spectrum near the origin, a fact that has to be taken into account in any (future) nonlinear stability analysis. To sum up, we provide for the first time the analytical proof of the spectral stability of sufficiently weak dispersive shock profiles for system \eqref{QHD-E}, a fundamental property that determines their nonlinear evolution.

\subsection*{Plan of the paper} This paper is structured as follows. Section \ref{secproperties} is devoted to write system \eqref{QHD-E} in conservation form and to describe the structure of dispersive shock profiles. The existence theory is reviewed and the monotonicity of sufficiently weak profiles is established. In Section \ref{secspectral} we pose the spectral stability problem and introduce a transformation of the perturbation variables suitable for our needs. The central and final Section \ref{sec:spectral} contains the energy estimate that controls the point spectrum, as well as the proof of our main result. We finish the paper with some concluding remarks and a discussion on open problems.

\subsection*{Notations}
Linear operators acting on infinite-dimensional spaces are indicated with calligraphic letters (e.g., $\cL$ and $\cA$). The domain of an operator, $\cL : X \to Y$, with $X$, $Y$ Banach spaces, is denoted as $D(\cL) \subseteq X$. We denote the real and imaginary parts of a complex number $\lambda \in \C$ by $\Re\lambda$ and $\Im\lambda$, respectively, as well as complex conjugation by ${\lambda}^*$. Standard Sobolev spaces of complex-valued functions on the real line will be denoted as $L^2(\R)$ and $H^m(\R)$, with $m \in \N$, endowed with the standard inner products and norms.

\section{Properties of dispersive shock profiles}
\label{secproperties}

In this section we recall the existence theory of \emph{dispersive shock profiles} for system \eqref{QHD-E} with nonlinear viscosity, which was mainly developed by Lattanzio and Zhelyazov in \cite{LaZ21a} (for related results, see \cite{Zhel-preprint}). In addition, we establish the monotonicity of sufficiently weak profiles, a property which will be useful for the stability analysis.

\subsection{Conservation form}
\label{secconserv}

First, we express system \eqref{QHD-E} in conservation form in the $(\rho, u)$ variables (see \cite{LaZ21a,LaZ21b}). 
To that end, we recall the definition of \emph{enthalpy} (see Gasser \cite{Gas01}),
\begin{equation}
\label{eq:ent}
	h(\rho) = \begin{dcases}
	\ln \rho, & \gamma = 1,\\
	\frac{\gamma}{\gamma-1}\rho^{\gamma-1}, & \gamma>1,
	\end{dcases}
\end{equation}
so that there holds the relation
\[
	p(\rho)_x = \rho h(\rho)_x.
\]
Apply this relation in order to recast system \eqref{QHD-E} into the following conservative form,
\begin{equation}
\label{QHD-EC}
\begin{aligned}
	\rho_t + (\rho u)_x &= 0,   \\
	u_t + \Big( \tfrac{1}{2}u^2 + h(\rho) \Big)_x & = \ep \mu \Big( \frac{(\rho u)_x}{\rho}\Big)_x + \ep^2 k^2 \left( \frac{(\sqrt{\rho})_{xx}}{\sqrt{\rho}}\right)_x.\\       
\end{aligned} 
\end{equation}

\begin{remark}
It is to be noticed that the conserved quantities in the QHD system \eqref{QHD-E} are the mass density and the velocity (not the momentum). The form of the system \eqref{QHD-EC} in Eulerian coordinates contrasts with its fluid dynamics counterpart with nonlinear viscosity and capillarity (see, e.g., \cite{DS85,HaLi96b,CCD15}), inasmuch as the conserved quantities are different for both systems, changing the structure of the equations. Indeed, the viscosity term for standard compressible fluids has the general form $(\tilde{\mu} (\rho) u_x)_x$, where $\tilde{\mu}(\rho)$ is a generic nonlinear viscosity term. Notice that, unlike the superfluid viscosity, it does not depend on $(\rho_x u/\rho)_x$. The dispersive term was already different due to the quantum Bohm potential, which contrasts with the capillarity terms of Korteweg type coming from interstitial work, having the form $k(\rho \rho_{xx} -\tfrac{1}{2}\rho_x^2)_x$ when the capillarity is assumed to be constant (see \cite{DS85,HaLi96b} for details).
\end{remark}

Once the system is recast in conservation form, one may examine the underlying \emph{hyperbolic} system, namely
\begin{equation}
\label{eq:hyperbolic}
	\begin{aligned}
		\rho_t+(\rho u)_x=0, \\
		u_t + \big(\tfrac{1}{2}u^2 + h(\rho) \big)_x=0.
	\end{aligned}
\end{equation}
The latter can be rewritten as $V_t + F(V)_x=0$, where $V=(\rho, u)^\top$ denotes the conserved quantities, and 
\[
F(V)=\left(\rho u,\tfrac{1}{2}u^2+h(\rho)\right),
\]
is the associated flux function. 
The system \eqref{eq:hyperbolic} is strictly hyperbolic; indeed, the Jacobian of $F$ is  
$$DF(\rho,u)=\begin{pmatrix}
u & \rho\\
h'(\rho) & u
\end{pmatrix},$$
and its real eigenvalues (characteristic speeds) are
\[
\lambda_1(\rho,u) := u - c_s(\rho) < \lambda_2(\rho,u) := u + c_s(\rho),
\]
where we have defined the sound speed as
\begin{equation}
\label{eq:sound-speed}
	c_s(\rho):=\sqrt{\rho h'(\rho)}=\sqrt{\gamma\rho^{\gamma-1}}.
\end{equation}
Notice that $c_s(\rho) > 0$ for each $\rho > 0$.

An ``inviscid'' shock is a planar front solution to the hyperbolic system \eqref{eq:hyperbolic} of the form
\begin{equation}
\label{hshock}
(\rho, u)(x,t) = \begin{cases}
(R^+, U^+), & x > st,\\
(R^-, U^-), & x < st,
\end{cases}
\end{equation}
where $s \in \R$ is the shock speed, and the constant end states satisfy $(R^+,U^+) \neq (R^-,U^-)$, $R^\pm > 0$, as well as the classical Rankine-Hugoniot jump conditions, namely
\begin{equation}
\label{RH}
\begin{aligned}
s(R^+-R^-) &= R^+ U^+ - R^- U^-,\\
s(U^+-U^-) &= \tfrac{1}{2}(U^+)^2 - \tfrac{1}{2}(U^-)^2 + h(R^+) - h(R^-).
\end{aligned}
\end{equation}

Planar shock fronts are weak solutions to the underlying hyperbolic system \eqref{eq:hyperbolic}. The shock speed $s \in \R$ is, of course, not arbitrary, but determined by the jump conditions \eqref{RH}. Furthermore, in order to guarantee uniqueness of the weak solution, an entropy condition should be imposed. In this paper we assume that the triplet $(R^\pm,U^\pm,s)$ satisfies \emph{Lax entropy condition} \cite{Da4e,La57}: it is said that the triplet is a Lax $k$-shock, with $k = 1,2$, provided that
\begin{equation}
\label{Lax}
\lambda_k(R^+,U^+) < s < \lambda_k(R^-,U^-).
\end{equation}

The end state $(R^\pm,U^\pm)$ is said to be subsonic (respectively, supersonic) if $|U^\pm| < c_s(R^\pm)$ (respectively, $|U^\pm| > c_s(R^\pm)$). In the case where $|U^\pm| = c_s(R^\pm)$ the state is said to be sonic.

\begin{remark}
Notice that if $R^+ = R^- > 0$ then conditions \eqref{RH} imply that $U^+ = U^-$. Therefore, in the case of a shock front with different end states we necessarily have that $R^+ \neq R^-$ and we may, without loss of generality, consider $|R^+ - R^-|$ as a measure of the shock amplitude.
\end{remark}

\subsection{Dispersive shock profiles}
\label{secshocks}

Assume that the triplet $(R^\pm,U^\pm,s)$ defines a planar shock front for system \eqref{eq:hyperbolic} that satisfies Rankine-Hugoniot conditions \eqref{RH} as well as Lax entropy condition \eqref{Lax}. Dispersive shock profiles for the system \eqref{QHD-EC} with viscosity and dispersion are traveling wave solutions of the form
\begin{equation}
\label{eq:prof}
	\rho(x,t)=R \Big( \frac{x-st}{\ep} \Big),\qquad \qquad u(t,x)=U\Big( \frac{x-st}{\ep} \Big),
\end{equation}
for a given constant $\ep > 0$ and with asymptotic limits
\[
	R^{\pm}=\lim_{\xi \rightarrow \pm \infty}R(\xi), \qquad \qquad  U^{\pm}=\lim_{\xi \rightarrow \pm \infty}U(\xi).
\]

Hence, for small values of $\ep > 0$, shock profile solutions to \eqref{QHD-E} constitute approximations of the discontinuous front \eqref{hshock}. Upon substitution of \eqref{eq:prof} into \eqref{QHD-E} one obtains the system
\begin{equation}
\label{profsyst}
\begin{aligned}
-sR'+(R U)'&=0,\\
-s U' + \tfrac{1}{2}(U^2)' + h(R)' &= \mu \Big(\frac{(R U)'}{R}\Big)' + k^2 \Big(\frac{(\sqrt{R})''}{\sqrt{R}}\Big)',
\end{aligned}
\end{equation}
where $' = d/d\xi$ denotes differentiation with respect to the Galilean variable $\xi = (x-st)/\ep$. Once the physical constant $\ep > 0$ has been fixed, with a slight abuse of notation but with no loss of generality, we make the transformation,
\begin{equation}
\label{galvar}
x \, \rightarrow \frac{x-st}{\ep},
\end{equation}
so that now, and for the rest of the paper, $x$ denotes the Galilean variable of translation.

Let us make some rearrangements of the profile equations that will be useful later on. Substitute the first equation in \eqref{profsyst} into the second to obtain
\begin{equation}
\label{profsyst2}
\begin{aligned}
-sR'+(R U)'&=0,\\
-s U' + \tfrac{1}{2}(U^2)' + h(R)' &= \mu \Big(\frac{sR'}{R}\Big)' + k^2 \Big(\frac{(\sqrt{R})''}{\sqrt{R}}\Big)'.
\end{aligned}
\end{equation}
Integrating the first equation in \eqref{profsyst2} up to $\pm \infty$ yields
\begin{equation}
\label{eq:U-R}
	U = s - \frac{A}{R},
\end{equation}
for all $x \in \R$, where $A := (s-U^{\pm})R^{\pm}$ is a constant in view of Rankine-Hugoniot conditions \eqref{RH}. Similarly, integration of the second equation in \eqref{profsyst2} implies the relation
\[
-sU + \tfrac{1}{2}U^2 + h(R) = s \mu \frac{R'}{R} + k^2 \frac{(\sqrt{R})''}{\sqrt{R}} + B,
\]
where $B:= -s U^\pm + \tfrac{1}{2}(U^\pm)^2 + h(R^\pm)$ is a constant of integration. 
Hence, we may use \eqref{eq:U-R} in order to obtain a planar ODE for the profile $R$.
The result is
\begin{equation}\label{2Dsys}
	R'' = \frac{2}{k^2}f(R) - \frac{2 s \mu}{k^2} R' + \frac{(R')^2}{2 R},
\end{equation}
where
\[
f(R) := R h(R) + \frac{A^2}{2R} - \tfrac{1}{2} s^2 R - RB.
\]
We can express the constants $A$ and $B$ in terms of $R^\pm$ alone with the help of Rankine-Hugoniot conditions \eqref{RH} and some straightforward algebra. This yields the expression
\begin{equation}
\label{longf}
\begin{aligned}
f(R) =\frac{R}{R^+ + R^-} &\left[\frac{(R^+R^-)^2}{R^2} \Big(\frac{h(R^+)-h(R^-)}{R^+ - R^-}\Big) + (R^+ + R^-) h(R)  \right. \\
& \left. \quad -\frac{(R^+)^2 h(R^+)-(R^-)^2 h(R^-)}{R^+ - R^-}\right].
\end{aligned}
\end{equation}

Henceforth, for a given triplet $(R^\pm, U^\pm,s)$ defining a Lax shock, it is sufficient to solve equation \eqref{2Dsys} for the density profile in the $(R, R')$-phase plane in order to obtain a dispersive shock profile. The velocity profile $U$ is then retrieved via equation \eqref{eq:U-R}. This is precisely the strategy  of proof by Lattanzio and Zhelyazov \cite{LaZ21a}, whose main existence result for dispersive shock profiles in can be stated as follows.
\begin{theorem}[existence of dispersive shocks]
\label{theoex}
Suppose that the triplet $(R^\pm,U^\pm,s)$ satisfies Rankine-Hugoniot jump conditions \eqref{RH} with $R^\pm > 0$ and defines either
\begin{itemize}
\item[\rm{(i)}] a Lax 2-shock with a subsonic right state (i.e. with $|U^+| < c_s(R^+)$); or,
\item[\rm{(ii)}] a Lax 1-shock with a subsonic left state (i.e. with $|U^-| < c_s(R^-)$).
\end{itemize}
Then there exists a dispersive shock profile solution $(R(x), U(x))$, $x \in \R$, connecting $(R^-,U^-)$ to $(R^+,U^+)$.
\end{theorem}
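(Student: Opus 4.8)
The plan is to reduce the existence of a profile to a heteroclinic-connection problem for the planar field \eqref{2Dsys} and to resolve it by a Lyapunov/phase-plane argument. Setting $P = R'$, equation \eqref{2Dsys} becomes the first-order system $R' = P$, $P' = \tfrac{2}{k^2}f(R) - \tfrac{2s\mu}{k^2}P + \tfrac{P^2}{2R}$, whose rest points are the $(R,0)$ with $f(R)=0$. Using $A=(s-U^\pm)R^\pm$ together with the value of $B$, one checks directly that $f(R^\pm)=0$, so both end states $(R^\pm,0)$ are equilibria; recovering $U$ from \eqref{eq:U-R}, the theorem amounts to producing an orbit that limits on $(R^-,0)$ as $\xi\to-\infty$ and on $(R^+,0)$ as $\xi\to+\infty$. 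To tame the quadratic term $P^2/(2R)$ I would first pass to $w=\sqrt{R}$, under which \eqref{2Dsys} collapses to the damped-oscillator form
\[
w'' + \frac{2s\mu}{k^2}\,w' + V'(w) = 0, \qquad V'(w) := -\frac{1}{k^2}\,\frac{f(w^2)}{w},
\]
exhibiting the dynamics as a gradient flow with linear damping and furnishing the energy $E=\tfrac12(w')^2+V(w)$ with $\dot E = -\tfrac{2s\mu}{k^2}(w')^2$.

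Next I would classify the two equilibria. Substituting the values of $A$ and $B$ yields the identity
\[
f'(R^\pm) = c_s^2(R^\pm) - (s-U^\pm)^2,
\]
with $c_s$ the sound speed \eqref{eq:sound-speed}, so the linearization at $(R^\pm,0)$, whose eigenvalues solve $\nu^2+\tfrac{2s\mu}{k^2}\nu-\tfrac{2}{k^2}f'(R^\pm)=0$, is governed by the sign of $f'(R^\pm)$ (equivalently, by whether $s$ lies inside or outside the characteristic interval $[\lambda_1,\lambda_2]$ at that state) and by the sign of $s$. The Lax conditions \eqref{Lax} place $s$ strictly between the characteristic speeds at one end (giving $f'>0$, a saddle) and outside them at the other (giving $f'<0$); and the subsonic hypothesis fixes the sign of $s$, since in case (i) one has $s>\lambda_2(R^+)=U^++c_s(R^+)>0$ while in case (ii) one has $s<\lambda_1(R^-)=U^--c_s(R^-)<0$. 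After the reflection $\xi\mapsto-\xi$, which turns case (ii) into case (i) by flipping the sign of the damping, one is left with a dissipative system ($s>0$) for which one end state is a saddle and the other a sink.

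The remaining, and global, task is the geometry of $V$. Writing $\phi(R):=Rf(R)=\tfrac{\gamma}{\gamma-1}R^{\gamma+1}-(\tfrac12 s^2+B)R^2+\tfrac12 A^2$ (for $\gamma>1$; the case $\gamma=1$ is analogous with $\ln R$ in place of the power), one has $\phi(0^+)>0$ and $\phi\to+\infty$, and, because $s^2+2B=(s-U^\pm)^2+2h(R^\pm)>0$, the function $\phi$ has a single positive critical point. Hence $\phi$, and therefore $f$, vanishes at exactly the two points $R^\pm$ and keeps a single strict sign between them, so $V$ possesses exactly one local minimum (the sink) and one local maximum (the saddle), with $V\to+\infty$ as $w\to0^+$ and $V\to-\infty$ as $w\to\infty$. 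The sought profile is the branch of the saddle's one-dimensional unstable manifold that descends into the well: the level set $\{E=V(\text{saddle})\}$ confines this orbit to a bounded region whose only interior equilibrium is the sink, and since $E$ is a strict Lyapunov function (the only invariant subset of $\{w'=0\}$ is the set of rest points), LaSalle's invariance principle forces convergence to the sink. Recovering $U$ via \eqref{eq:U-R} completes the construction.

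I expect the crux to lie in two places. The first is the equilibrium classification: it rests on the identity $f'(R^\pm)=c_s^2(R^\pm)-(s-U^\pm)^2$ and on reading off the sign of $s$ from the subsonic assumption, and this is precisely where the hypotheses of the theorem are consumed. The second is the global shape analysis of $\phi$, guaranteeing exactly two equilibria and the single-well structure of $V$; this is what makes the trapping-region/LaSalle argument succeed for shocks of arbitrary (not merely small) amplitude, and it is the step most sensitive to the explicit form \eqref{longf} of $f$.
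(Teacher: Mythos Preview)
The paper does not give its own proof of this theorem; it simply refers to Lemma~1 and Corollary~2 of Lattanzio--Zhelyazov \cite{LaZ21a}. There is therefore nothing in the present paper to compare your argument against. That said, your outline is correct and is the natural route: the substitution $w=\sqrt{R}$ collapses \eqref{2Dsys} to a linearly damped oscillator $w''+\tfrac{2s\mu}{k^2}w'+V'(w)=0$, the identity $f'(R^\pm)=c_s^2(R^\pm)-(s-U^\pm)^2$ combined with the Lax inequalities classifies one end state as a saddle and the other as a node, the subsonicity hypothesis fixes the sign of $s$ (hence of the damping) so that the node is a sink, and the single-critical-point analysis of $\phi(R)=Rf(R)$ yields exactly two equilibria with $V\to+\infty$ at $0^+$ and $V\to-\infty$ at $\infty$, after which a level-set trap plus LaSalle finishes the job.

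Two small points you could make fully explicit. First, the sign analysis of $\phi$ (positive outside $[R^+,R^-]$, negative between) together with $f'(R^+)<0$, $f'(R^-)>0$ actually \emph{forces} the ordering $R^+<R^-$ in case~(i) (and $R^->R^+$ after reflection in case~(ii)); this is what guarantees that the sink sits at the bottom of the well bounded on the far side by the wall $V\to+\infty$, so that the inward branch of the unstable manifold is indeed trapped. Second, in the saddle analysis you should note that the Rankine--Hugoniot relation $A=(s-U^+)R^+=(s-U^-)R^-$ forces $s-U^+$ and $s-U^-$ to have the same sign, which is what lets you conclude $|s-U^-|<c_s(R^-)$ (and hence $f'(R^-)>0$) from the single Lax inequality $s<\lambda_2(R^-,U^-)$ in case~(i); the paper's statement \eqref{Lax} only gives the $\lambda_k$ inequalities and not the full set of Lax conditions, so this step is needed.
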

\begin{proof}
See Lemma 1 and Corollary 2 in \cite{LaZ21a}.
\end{proof}

\begin{remark}
Theorem \ref{theoex} guarantees the existence of a dispersive shock profile independently of the shock amplitude, that is, of $|R^+-R^-|$. The density profile $R = R(x)$ may be non-monotone depending on the magnitude of the ratio $\mu / k$. Yet, it stays away from vacuum, inasmuch as $R(x) \geq C > 0$ for all $x \in \R$ (see Lattanzio and Zhelyazov \cite{LaZ21a}); one can fix the end state $R^-$, as well as the physical parameters, and vary $R^+$ in order to examine the behavior of the profiles as the amplitude $\vep = |R^+ - R^-|$ increases. Figure \ref{figprofiles} depicts the numerical calculation of different density profiles $R = R(x)$ for fixed parameter values $\gamma = 3/2$, $R^-=0.7$, $s = 1$, $\mu = 1$ and $k = \sqrt{2}$. The profiles are numerically computed as solutions to the profile equation \eqref{2Dsys} for different values of the shock amplitude, $\vep = R^- - R^+$. Notice that for small values of $\vep$ the profiles are monotone, but they start to oscillate as $\vep$ increases; for further details, see \cite{LaZ21a,Zhel-preprint}. 
\begin{figure}[t]
\begin{center}
\includegraphics[scale=.55, clip=true]{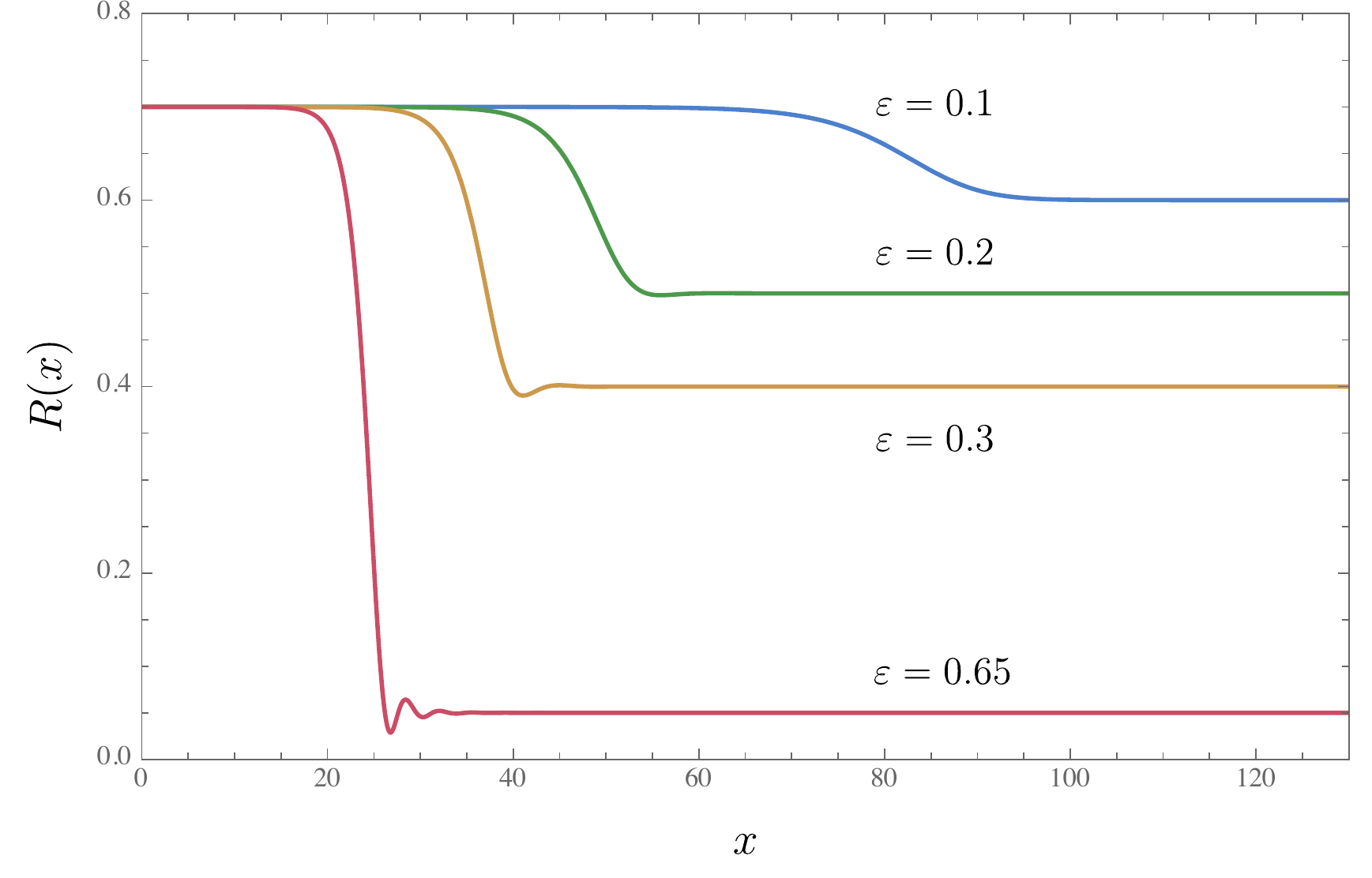}
\end{center}
\caption{\small{Numerical calculation of the density profiles $R = R(x)$ as solutions to the equation \eqref{2Dsys} with parameter values $\gamma = 3/2$, $R^- = 0.7$, $s=1$, $\mu = 1$, $k = \sqrt{2}$ for different values of the shock strength $\varepsilon = R^- - R^+$ (color online).}}
\label{figprofiles}
\end{figure}
\end{remark}

\subsection{Monotonicity of small-amplitude shock profiles}
\label{sec:monotonicity}

In the sequel, we focus our attention on the case of \emph{weak} shock profiles, for which the shock amplitude $\e:=|R^+-R^-|$ is sufficiently small. In the small-amplitude regime dispersive shocks comply with compressivity, that is, they are monotone and resemble their purely viscous profile counterparts. The monotonicity of the profiles is a crucial property that plays a key role in the proof of spectral stability (see Section \ref{sec:spectral} below). In this section we show that sufficiently weak dispersive shocks are indeed monotone and establish some useful estimates for its derivatives.

\begin{proposition}[monotonicity of weak profiles]
\label{lem:R-prop}
Let $R^->0$.
There exists a sufficiently small $\varepsilon_0>0$ such that:
\begin{itemize}
\item[\rm{(a)}] If $s>0$ and $0<R^--R^+<\varepsilon_0$, then there exists a heteroclinic trajectory $R$ of \eqref{2Dsys} connecting $(R^-, 0)$ to $(R^+, 0)$ (equilibria in the $(R, R')$-phase plane), which is monotone decreasing with $R'(x)<0$ for all $x\in\R$.
\item[\rm{(b)}] If $s<0$ and $0<R^+-R^-<\varepsilon_0$ then there exists a heteroclinic trajectory $R$ of \eqref{2Dsys} connecting $(R^-, 0)$ to $(R^+, 0)$ (equilibria in the $(R, R')$-phase plane), which is monotone increasing with $R'(x)>0$ for all $x\in\R$.
\end{itemize}
In both cases, we have 
\begin{equation}
\label{eq:prop-R}
	|R'(x)|\leq C\e^2, \qquad |R''(x)|\leq C\e|R'(x)|, 
\end{equation}
for all $x \in \R$ and for some uniform constant $C > 0$.
\end{proposition}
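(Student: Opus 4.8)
The plan is to analyze the planar ODE \eqref{2Dsys} in the $(R,R')$-phase plane near the equilibrium $(R^-,0)$ and track the unstable/stable manifold structure as the amplitude $\e \to 0$. First I would rewrite \eqref{2Dsys} as the first-order system $R' = P$, $P' = \tfrac{2}{k^2}f(R) - \tfrac{2s\mu}{k^2}P + \tfrac{P^2}{2R}$, and locate its equilibria on the line $P=0$: these are exactly the zeros of $f$. Using the explicit formula \eqref{longf}, one checks that $f(R^\pm)=0$, and that for $\e = |R^+-R^-|$ small the only relevant zeros are $R^+$ and $R^-$, with $f$ of one sign strictly between them. A Taylor expansion of $f$ at $R^-$ shows $f'(R^-)$ is, to leading order, proportional to $(R^- - R^+)$ times a nonzero factor controlled by the hyperbolicity/Lax condition (essentially $c_s(R^-)^2 - (s-U^-)^2$ up to positive constants), which is where the hypothesis $s \in (0,\bar s)$ and the Lax condition enter to fix the sign of $f$ between the end states and the signs of $f'(R^\pm)$.

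Next I would linearize the system at each equilibrium. At $(R^\pm, 0)$ the Jacobian is $\begin{pmatrix} 0 & 1 \\ \tfrac{2}{k^2}f'(R^\pm) & -\tfrac{2s\mu}{k^2}\end{pmatrix}$, with eigenvalues solving $\nu^2 + \tfrac{2s\mu}{k^2}\nu - \tfrac{2}{k^2}f'(R^\pm) = 0$. When $f'(R^\pm)$ has the appropriate sign (one positive, one negative, forced by the above sign analysis of $f$ on the interval between $R^+$ and $R^-$), one equilibrium is a saddle and the other a stable node (for $s>0$; the node/saddle roles and the sign of $P$ flip for $s<0$). I would then invoke a standard phase-plane / Poincaré–Bendixson argument: the branch of the unstable manifold of the saddle that leaves into the strip $\{R^+ < R < R^-,\ P<0\}$ (case $s>0$) cannot escape the strip — $f$ does not vanish in the open interval so $P'$ has a fixed sign when $P=0$, and the line $R = R^\pm$ is invariant only through the equilibria — hence it must limit onto the node, producing the monotone heteroclinic. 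Monotonicity is immediate because $R' = P$ keeps a constant sign along the connection. (Alternatively, one can cite that the connection already exists by Theorem \ref{theoex} and only argue that for small $\e$ it stays in the strip $P<0$, i.e. that no oscillation occurs, by the same no-interior-zero-of-$f$ argument.)

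For the quantitative bounds \eqref{eq:prop-R}, I would use the two conserved-type relations behind \eqref{2Dsys}. Integrating the first profile equation gives $U = s - A/R$ with $A = (s-U^\pm)R^\pm$, so $U' = (A/R^2)R'$, and the second integrated equation reads $-sU + \tfrac12 U^2 + h(R) = s\mu R'/R + k^2 (\sqrt R)''/\sqrt R + B$. The left-hand side, call it $g(R)$, satisfies $g(R^\pm)=0$ and $g(R) = O(\e)\cdot O(R-R^-) = O(\e^2)$ uniformly on the connecting orbit, since $R - R^\pm = O(\e)$ there; hence $s\mu R'/R + k^2(\sqrt R)''/\sqrt R = O(\e^2)$. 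Because $R$ stays in a fixed compact interval bounded away from vacuum, this means $\mu s R' + \tfrac{k^2}{2}R'' - \tfrac{k^2}{4}(R')^2/R = O(\e^2)$; combined with the known exponential decay of $R - R^\pm$ (from the node/saddle eigenvalues, which stay bounded away from $0$ and $\infty$ as $\e\to 0$ provided $s>0$ is bounded away from $0$), a bootstrap gives $|R'| \le C\e^2$ and then $|R''| \le C\e|R'|$ from the same relation, absorbing the quadratic term $(R')^2 = O(\e^4)$. The main obstacle I anticipate is making the first bound $|R'|\le C\e^2$ genuinely uniform: the naive estimate from $g(R)=O(\e^2)$ only controls $\mu s R' + \tfrac{k^2}{2}R''$, and one must rule out cancellation between $R'$ and $R''$ on long time scales; this is handled by combining the differential relation with the exponential-decay rate at both ends (which degenerates only as $s\to 0$, hence the hypothesis $s>0$), so that $R'$ inherits the $O(\e^2)$ size of the driving term uniformly in $x$.
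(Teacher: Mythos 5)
Your overall strategy (phase-plane analysis plus a quantitative bound extracted from the profile ODE) is reasonable, but as written it has two genuine gaps, and at both points the paper takes a different route (a singular-perturbation/slow-manifold rescaling) precisely to avoid them. First, the monotonicity argument: the strip $\{R^+<R<R^-,\ P<0\}$ is \emph{not} forward invariant. The segment $\{R=R^+,\ P<0\}$ is not a barrier — orbits cross it transversally since $R'=P<0$ there — and this is exactly what the larger-amplitude profiles of Theorem \ref{theoex} do: the connection exists, overshoots $R^+$, and spirals into $(R^+,0)$, being turned around by $f>0$ outside $[R^+,R^-]$. So ``$f$ has no zero in the open interval'' plus ``saddle at $R^-$, stable node at $R^+$'' does not exclude overshoot; you would need to construct a genuinely invariant region whose lower boundary is a curve through $(R^+,0)$ tied to the eigendirections, and you would also need to check that $(R^+,0)$ is a node rather than a spiral, which requires $f'(R^+)=O(\e)$ against the discriminant, not just its sign. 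Relatedly, your parenthetical claim that the relevant eigenvalues ``stay bounded away from $0$'' as $\e\to 0$ is false: since the two zeros of the convex function $f$ collide, $f'(R^\pm)=O(\e)$, so the saddle's unstable eigenvalue and the node's weak stable eigenvalue are both $O(\e)$; this is why the profile's natural length scale is $1/\e$, and it is what the paper exploits by setting $R=\e\overline R+R^--\tfrac{\e}{2}$, $z=\e x$ and reducing to the slow manifold, on which the dynamics is the scalar equation $\overline R_z=c\big(\overline R^2-\tfrac14\big)+\e h_1$ and monotonicity is immediate.

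Second, the bounds \eqref{eq:prop-R}. Your ``integrated relation'' is just \eqref{2Dsys} again (indeed $f(R)=R\big[-sU+\tfrac12U^2+h(R)-B\big]$), so it provides no independent information. The estimate it yields is of the form $\sup|R'|\le C_1\e^2+C_2\sup|R'|^2$, which gives $|R'|\le C\e^2$ only if one already knows $\sup|R'|$ is small uniformly in $\e$ (to select the small root of the quadratic); the exponential decay at the two ends does not supply this, so the bootstrap does not close as written. More seriously, the second bound $|R''|\le C\e|R'|$ is a \emph{pointwise} statement: from \eqref{2Dsys} it requires the near-cancellation $|f(R(x))-s\mu R'(x)|\le C\e|R'(x)|$ along the orbit, in particular in the tails where both terms are exponentially small and individually comparable to $|R'|$ with an $O(1)$ constant; a crude $O(\e^2)$ bound on the combination only gives $|R''|\le C(|R'|+\e^2)$, which is far weaker than what is claimed. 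The paper gets both estimates at once from the scaling on the slow manifold, $R'(x)=\e^2\overline R'(\e x)$ and $R''=\e R'\big(2c\overline R+\e\partial_R h_1\big)$. So either carry out that reduction (or an equivalent pointwise refinement near the nullcline), or your proof of monotonicity and of the second inequality in \eqref{eq:prop-R} remains incomplete.
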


\begin{proof}
Let us start by proving \textrm{(a)}. In this case we have $s>0$ and assume that the amplitude of the shock, $\e:= R^- - R^+$,
satisfies $\e\in(0,\e_0)$ with $\vep_0 > 0$ to be detrmined.
Since  $R^+ = R^- - \e$, substitution into \eqref{longf} yields
\begin{align*}
	f(R)&=\frac{R}{2 R^- - \e}\left[\frac{((R^- - \e) R^-)^2}{R^2} \Big( \frac{h(R^-) - h(R^- - \e)}{\e} \Big) + (2 R^- - \e) h(R) \right.\\
	&\qquad \qquad\qquad  \qquad \left.- \frac{(R^-)^2 h(R^-) - (R^- - \e)^2 h(R^- - \e)}{\e} \right]\\
	&=\frac{m_1}{R}+Rh(R)-Rm_2,
\end{align*}
where
\begin{align*}
	m_1&:= \frac{((R^- - \e) R^-)^2}{2 R^- - \e} \left(\frac{h(R^-) - h(R^- - \e)}{\e} \right),\\
	m_2&:= \frac{1}{2 R^- - \e} \left( \frac{(R^-)^2 h(R^-) - (R^- - \e)^2 h(R^- - \e)}{\e} \right),
\end{align*}
are constants depending only on $R^-$ and $\e$. A direct differentiation yields
\begin{align*}
	f'(R)&= \begin{dcases}
	-\frac{m_1}{R^2} + \ln R + 1 - m_2, & \gamma =1,\\
	-\frac{m_1}{R^2} + \frac{\gamma^2}{\gamma - 1} R^{\gamma - 1} - m_2, &\gamma > 1,
\end{dcases}\\
	f''(R)&= \begin{dcases}
	\frac{2 m_1}{R^3} + \frac{1}{R}, \qquad \qquad & \gamma = 1,\\
	\frac{2 m_1}{R^3} + \gamma^2 R^{\gamma - 2}, & \gamma > 1.
	\end{dcases}
\end{align*}
As a consequence, $f'(R) \rightarrow -\infty$ as $R \rightarrow 0^+$ and $f'(R) \rightarrow \infty$ as $R \rightarrow \infty$, because of $m_1>0$.
Moreover, $f''(R) >0$ for $R > 0$ and so, $R^{\pm}$ are the only two positive zeros of $f$. 
Furthermore, $f'(R)$ is monotonically increasing and it has a unique zero $R_0 = R_0(\e)$ which, in addition, verifies $R^+ < R_0 < R^-$.
	
Since the function $f$ is not defined for $\e=0$, we now introduce a function $\overline{f}$ which is defined in a neighborhood of $\e=0$ and agrees with $f$ for $\e>0$. 
Then, we apply the implicit function theorem to $\overline{f}$ to study the behavior of $R_0(\e)$ for $\e > 0$ small.
First, choose $\tilde{\e}_0 \in (0,R^-)$, and let us define for $|\nu| < \tilde{\e}_0$,
\begin{equation*}
	S_1(\nu):= \begin{dcases}
	\frac{h(R^-) - h(R^- - \nu)}{\nu}, & \nu \neq 0,\\
	h'(R^-), & \nu = 0,
	\end{dcases}
\end{equation*}
and
\begin{equation*}
	S_2(\nu):= \begin{dcases}
	\frac{(R^-)^2h(R^-) - (R^- - \nu)^2 h(R^- - \nu)}{\nu}, & \nu \neq 0,\\
	R^-(2 h(R^-) + R^- h'(R^-)), & \nu = 0.
	\end{dcases}
\end{equation*}
Next, define on $\mathbb{R}^+ \times \{ |\nu|  < \tilde{\e}_0 \}$,
\begin{equation*}
	\overline{f}(R, \nu) = \frac{R}{2 R^- - \nu} \left[\frac{((R^- - \nu) R^-)^2}{R^2}S_1(\nu) + (2 R^- - \nu)h(R) - S_2(\nu)\right].
\end{equation*}
Clearly, $\overline{f}$ is defined in a neighborhood of $(R^-,0)$, it is smooth and $\overline{f}(R,\e) = f(R)$ for all $R > 0$ and all $\e \in (0,\tilde{\e}_0)$. Let $\psi(R,\nu):= \overline{f}_R(R,\nu)$, the derivative of $\overline{f}$ with respect to $R$. Hence $\psi(R^-,0) = 0$ and
\[
	 \psi_{R}(R^-,0) = \overline{f}_{RR}(R^-,0) = 3 h'(R^-) + R^- h''(R^-) = \gamma(\gamma + 1)(R^-)^{\gamma - 2}.
\]
Therefore, $\psi_R(R^-,0) > 0 $ for $\gamma \geq 1$. By the implicit function theorem there exist open intervals $I$ and $J$, with $(R^-,0) \in I \times J \subset \mathbb{R}^+ \times \{ |\nu|  < \tilde{\e}_0 \}$, and a unique function $\widetilde{R}_0: J \rightarrow I$ such that $\psi(\widetilde{R}_0(\nu),\nu) = 0$ for $\nu \in J$. In particular $\widetilde{R}_0(0) = R^-$ and $\widetilde{R}_0$ is smooth. We also have
\[
	\begin{aligned}
	\psi_\nu (R^-,0) &= \frac{1}{2}(3 h'(R^-) + R^- h''(R^-)),\\
	\psi_{RR}(R^-,0) &= -\frac{3 h'(R^-)}{R^-} + 3 h''(R^-) + R^- h'''(R^-),\\
	\psi_{R \nu}(R^-,0) &= -\frac{1}{2}\left(\frac{3 h'(R^-)}{R^-} + h''(R^-)\right),\\
	\psi_{\nu \nu}(R^-,0) &= -\frac{1}{6}\left(\frac{3 h'(R^-)}{R^-} + 9 h''(R^-) + 2 R^- h'''(R^-)\right).
	\end{aligned}
\]
By differentiating $\psi(\widetilde{R}_0(\nu),\nu) = 0$, we infer
\begin{align*}
	\widetilde{R}_0'(0)&= - \frac{\psi_\nu(R^-,0)}{\psi_R(R^-,0)}, \\ 
	\widetilde{R}_0''(0)&= \left. \left( \frac{- (\psi_R)^2 \psi_{\nu\nu} + 2 \psi_{\nu} \psi_R \psi_{\nu R} - (\psi_{\nu})^2 \psi_{RR}}{(\psi_R)^3} \right) \right|_{(R^-,0)}.
\end{align*}
This yields, in turn, $\widetilde{R}_0'(0) = -\tfrac{1}{2}$ and $\widetilde{R}_0''(0) = (\gamma - 3)/12 R^-$. Since $\widetilde{R}_0(\nu) = R_0(\nu)$ for $\nu \in J \cap \{\nu > 0\}$, we obtain
\begin{equation}
\label{expansion_R0}
	R_0(\e) = R^- - \frac{\e}{2} + \frac{\gamma - 3}{24 R^-}\e^2 + O(\e^3).
\end{equation}
Let us expand $f(R)$ around $R_0$:
\begin{equation}
\label{expansion_f}
	f(R) = f(R_0) + f'(R_0)(R-R_0) + \frac{1}{2} f''(R_0)(R-R_0)^2 + O(|R-R_0|^3), 
\end{equation}
and consider the change of variables
\begin{equation}
\label{expr_R}
	R = \e \overline{R} + R^- - \frac{\e}{2}.
\end{equation} 
In this fashion, the value $R = R^+$ corresponds to $\overline{R} = -\frac{1}{2}$ and $R = R^-$ is mapped into $\overline{R} = \frac{1}{2}$. 
Therefore, the interval $[R^+, R^-]$ is in one-to-one correspondence with $\left[-\frac{1}{2},\frac{1}{2}\right]$ in the variable $\overline{R}$ and it is independent of $\e$.

Now, let us make a change of the independent variable, $z := \varepsilon x$, and substitute \eqref{expr_R} into \eqref{2Dsys}. This yields
\begin{equation}\label{2Dsys_Rbar}
	\varepsilon^3 \overline{R}_{zz} = \frac{2}{k^2} f\left(\e \overline{R} + R^- - \frac{\e}{2}\right)-\frac{2 s \mu}{k^2}\e^2 \overline{R}_z + 
	\frac{1}{2}\frac{\e^4}{\e\overline{R} + R^- -\frac{\e}{2}}(\overline{R}_z)^2.
\end{equation}
Using \eqref{expansion_R0} we compute
\begin{equation}
\label{exprf}
\begin{aligned}
f(R_0) &= - \tfrac{1}{8}\gamma (\gamma + 1) (R^-)^{\gamma - 2} \e^2 + O(\e^3),\\
f'(R_0) &= 0,\\
f''(R_0) &= \gamma (\gamma + 1) (R^-)^{\gamma - 2} + O(\e). 
\end{aligned}	
\end{equation}
From \eqref{expr_R} and \eqref{expansion_R0} one clearly has $R - R_0 = \e \overline{R}+ O(\e^2)$. Hence, upon substitution of \eqref{exprf} into \eqref{expansion_f}, we end up with 
\[
	f(R) = -\tfrac{1}{8}{\gamma (\gamma + 1)} (R^-)^{\gamma - 2} \e^2 + \tfrac{1}{2}{\gamma (\gamma + 1)} (R^-)^{\gamma - 2} \e^2 \overline{R}^2 + O(\e^3).
\]
Upon substitution of last expression into \eqref{2Dsys_Rbar}, we obtain the leading order equation,
\begin{equation}
	\overline{R}_z = c \Big(\overline{R}^2 - \tfrac{1}{4}\Big), \label{reduced_eq}
\end{equation}
where
\begin{equation*}
	c = \frac{\gamma (\gamma + 1) (R^-)^{\gamma - 2}}{2 s \mu} > 0.
\end{equation*}
Equation \eqref{reduced_eq} has solutions converging to $\mp 1/2$ as $z \rightarrow \pm \infty$, with $R_z< 0$, and which are contained in the interval $(-\frac{1}{2},\frac{1}{2})$.
	
Now, let $Q = \overline{R}_z$. The slow manifold (see, e.g., Jones \cite{J95}) in the phase plane for the $(\overline{R},Q)$ variables, is given by
$\cM_{\e} = \left\{(\overline{R},Q) \in \R^2 \, : \, Q = h^{\varepsilon}(\overline{R}) \right\}$, where $h^{\e}(\overline{R}) = h_0(\overline{R}) + \e h_1(\overline{R},\e)$, $h_0(\overline{R}) = c \big( \overline{R}^2 - \tfrac{1}{4} \big)$ and $h_1$ is a smooth function of $\vep$ and $\overline{R}$ of order $O(1)$.
Therefore, the equation on the slow manifold is
\begin{equation}
\label{eq_slow_manifold}
	\overline{R}_z= c \Big(\overline{R}^2 - \tfrac{1}{4}\Big) + \varepsilon h_1(\overline{R}, \e).
\end{equation}
For sufficiently small values of $\e > 0$, the term $\varepsilon h_1(\overline{R}, \e)$ does not affect the monotonicity of the solutions to \eqref{eq_slow_manifold}, which converge to $\mp 1/2$ as $z \rightarrow \pm \infty$. 
Moreover, these solutions are strictly decreasing and we have
\[
	R(x) = \varepsilon \overline{R}(\e x) + \tfrac{1}{2}(R^+ + R^-), \qquad x\in\R.
\]
Direct differentiation with respect to $x$ yields
\begin{equation}
\label{eq_dR}
	R'(x) = \e^2 \overline{R}'(\e x).
\end{equation}	
Thus, we deduce the existence of $\vep_0 > 0$ sufficiently small such that for all $\vep \in (0, \vep_0)$, we have $R'(x) < 0$ for all $x \in \R$. Moreover, since $|\overline{R}\,'(z)| \leq C$, where $C$ does not depend on $\e$, we obtain the first inequality in \eqref{eq:prop-R}.
Furthermore, direct differentiation of \eqref{eq_dR} with respect to $x$ and equation \eqref{eq_slow_manifold}, yield $R''(x) = \e R'(x)\left(2 c \overline{R}(\e x)  + \e \partial_R h_1(\overline{R}(\e x), \e)\right)$. Since $\big| 2 c R + \e \partial_R h_1 \big| \leq C$ for some uniform constant $C>0$ independent of $\vep$, we obtain the second inequality in \eqref{eq:prop-R}. This shows (a).
	
We now prove \textrm{(b)}. Recall that now we have $s<0$, $R^- < R^+$ and $\e := R^+ - R^- > 0$ is the amplitude of the shock. Let us make the change of variables $\tilde{x} = - x$ and $\widetilde{R}(\tilde{x}) = R(x)$ in \eqref{2Dsys}. Denoting $' = d/d\tilde{x}$, we obtain the equation
\begin{equation*}
	\widetilde{R}''=\frac{2}{k^2} f(\widetilde{R}) - \frac{2 \tilde{s} \mu}{k^2} \widetilde{R}' + \frac{\widetilde{R}'^2}{2 \widetilde{R}},
\end{equation*}
where $\tilde{s} = -s$. Notice that this equation has the same form as equation \eqref{2Dsys}. Let $\widetilde{R}^+ = R^-$ and $\widetilde{R}^- = R^+$. Hence, we repeat the steps in case \textrm{(a)} (with the difference that we now expand around $\widetilde{R}^+$) in order obtain the same reduced equation, namely, $\overline{R}_{\tilde z}= c \big( \overline{R}^2 - \tfrac{1}{4} \big)$, where $\tilde z=\e\tilde x$, but now with 
\[
c=\frac{\gamma (\gamma + 1) (\widetilde{R}^+)^{\gamma - 2}}{2 \tilde{s} \mu} > 0.
\]
The conclusion follows exactly as in case \textrm{(a)}.
\end{proof}
\begin{remark}
\label{remonlyspos}
From the statement of Proposition \ref{lem:R-prop} we distinguish two cases: either $s>0$, and we have a monotone decreasing density profile $R$; or, $s<0$, and the profile $R$ is strictly increasing. For simplicity in the exposition, from this point on and for the rest of the paper we consider only the case $s>0$ (monotone decreasing density profile). Our results can be easily extended to case $s<0$.
\end{remark}
\begin{remark}
From relation \eqref{eq:U-R} between the two profile functions, one can obtain equivalent estimates for the velocity profile. For instance, let us express the constant $A$ as a function of $R^+$ and $R^-$,
\[
A = R^+ R^- \sqrt{2 \frac{h(R^+) - h(R^-)}{(R^+)^2 - (R^-)^2}.}
\]
If we choose $R^+ = R^- -\e$, with $0 < \e < R^-$ (recall we are now assuming $s > 0$), then we have
\begin{equation*}
A = \frac{( R^- - \e)R^-}{\sqrt{2 R^- - \e}}\sqrt{2 \frac{h(R^-) - h(R^- - \e)}{\e}}.
\end{equation*}
Hence, there exists a constant $\e_0 > 0$ such that the following expansion holds
\begin{equation}
	A = R^- c_s(R^-) - \tfrac{1}{4}(\gamma + 1) c_s(R^-) \e + O(\e^2), \label{expansion_A}
\end{equation}
for $0 < \e < \e_0$, where the sound speed at the end state is $c_s(R^-) = \sqrt{\gamma (R^-)^{\gamma - 1}}$. Therefore, by differentiating and using \eqref{eq:prop-R} and \eqref{expansion_A}, we obtain
\begin{equation}
\label{eq:est-U}
	|U'(x)|\leq C\e^2, \qquad |U''(x)|\leq C\e|R'(x)|,	
\end{equation}
for any $x\in\R$, for some constant $C>0$ independent of $\e$. 
\end{remark}

\section{The stability problem}
\label{secspectral}

In this section we describe the spectral stability problem associated to a dispersive shock profile for system \eqref{QHD-E}. Moreover, we introduce a new transformation in the perturbation variables that allows to perform the energy estimates.

\subsection{Perturbation equations and the linearized operator}

To begin our stability analysis, let us consider a solution to \eqref{QHD-EC} of the form $(\widetilde{\rho},\widetilde{u}) + (R,U)$ where $(\widetilde{\rho},\widetilde{u})$ is a perturbation of a given dispersive shock profile $(R,U)$. As before, we make the transformation \eqref{galvar} and, in addition, rescale the time variable as $t \rightarrow t/\ep$. Hence, one finds that the perturbation variables solve the nonlinear system
\[
\begin{aligned}
	-s \widetilde{\rho}_x + \widetilde{\rho}_t - s R_x+ \big((\widetilde{\rho}+R) (\widetilde{u}+U)\big)_x &= 0,   \\
	-s \widetilde{u}_x + \widetilde{u}_t -s U_x + \Big( \tfrac{1}{2}(\widetilde{u}+U)^2 + h(\widetilde{\rho}+R) \Big)_x & = 
	\mu \left( \frac{((\widetilde{\rho}+R) (\widetilde{u}+U))_x}{\widetilde{\rho}+R}\right)_x \\
	&\qquad  + k^2 \left( \frac{\big(\sqrt{\widetilde{\rho}+R \,} \,\big)_{xx}}{\sqrt{\widetilde{\rho}+R\,}}\right)_x.\\       
\end{aligned} 
\]
For nearby perturbations the leading approximation is given by the linearization of the former system around $(R,U)$, yielding the following linear system of equations
\[
\begin{aligned}
	\widetilde{\rho}_t &= s\widetilde{\rho}_x - \big( R \widetilde{u} + U \widetilde{\rho} \big)_x ,   \\
	\widetilde{u}_t & = s \widetilde{u}_x - \big(U \widetilde{u}\big)_x - \big(h'(R) \widetilde{\rho}\big)_x + \mu \big( R^{-1} ( R \widetilde{u} + U \widetilde{\rho} )_x\big)_x - \mu \big( R^{-2} (RU)_x \widetilde{\rho} \big)_x +\\
	&\quad + \tfrac{k^2}{2}  \big( R^{-\frac{1}{2}} (R^{-\frac{1}{2}} \widetilde{\rho})_{xx})\big)_x - \tfrac{k^2}{2} \big( R^{-\frac{3}{2}} (R^{\frac{1}{2}})_{xx}) \widetilde{\rho}\big)_x ,\\   
\end{aligned}
\]
where we have substituted the profile equations \eqref{profsyst2}. Specializing these linearized equations to perturbations of the form $(\widetilde{\rho},\widetilde{u})(x,t) = e^{\lambda t} (\hat{\rho},\hat{u})(x)$, where $\lambda \in \C$ and $(\hat{\rho},\hat{u})$ lies in an appropriate Banach space $X$, we naturally arrive at the eigenvalue problem
\begin{equation}
\label{eq_variable_coeff}
	\cL\begin{pmatrix} \hat{\rho}\\
	\hat{u} \end{pmatrix} = \lambda \begin{pmatrix} \hat{\rho}\\ \hat{u}
	\end{pmatrix},
\end{equation}
where the operator $\cL$ has the form
\begin{equation}
\label{operator_Lnd}
	\cL\begin{pmatrix}\hat{\rho}\\ \hat{u}\end{pmatrix}
	\\:=\begin{pmatrix}
	s \hat{\rho}'  -(R \hat{u} + U \hat{\rho})'\\ $\,$ \\
	s \hat{u}' - (U \hat{u})' - (h'(R) \hat{\rho})' + \mu (R^{-1}(R \hat{u} + U \hat{\rho})')' - \mu (R^{-2} (R U)' \hat{\rho})' +\\
  	+ \tfrac{k^2}{2}(R^{-\frac{1}{2}}(R^{-\frac{1}{2}} \hat{\rho})'')' - \tfrac{k^2}{2}(R^{-\frac{3}{2}}(R^{\frac{1}{2}})'' \hat{\rho})'
	\end{pmatrix}.
\end{equation}
Here, once again, $' = d/dx$ denotes differentiation with respect to the Galilean translation variable. 

Motivated by the notion of spatially localized, finite energy perturbations in the Galilean coordinate frame in which the profile is stationary, we consider the (complex) space $X = L^2(\R) \times L^2(\R)$ and regard $\cL$ as a closed, densely defined operator acting on $L^2(\R) \times L^2(\R)$ with domain $\cD(\cL) := H^3(\R) \times H^2(\R)$. Formally, a necessary condition for the profile to be stable is the absence of solutions $(\hat{\rho}, \hat{u}) \in \cD(\cL)$ to the spectral problem \eqref{eq_variable_coeff} for some $\lambda \in \C$ with $\Re \lambda > 0$, precluding the existence of solutions to the linearized equations of the form $e^{\lambda t} (\hat{\rho}, \hat{u})(x)$ that grow exponentially in time. Hence, we have the following
\begin{definition}[spectral stability]
A dispersive shock profile $(R,U)$ is \emph{spectrally stable} if the $L^2$-spectrum of the linearized operator $\cL$ is contained in the stable complex half plane, that is,
\[
\sigma(\cL) \subset \{ \lambda \in \C \, : \, \Re \lambda < 0 \} \cup \{ 0 \}.
\]
\end{definition}

\begin{remark}
\label{remspectra}
For any closed, densely defined operator $\cL$, the standard definitions of its spectrum and its resolvent set (denoted as $\sigma(\cL)$ and $\varrho(\cL)$, respectively) apply here (see, e.g., Kato \cite{Kat80}). For nonlinear wave stability purposes (see, e.g., Kapitula and Promislow \cite{KaPro13}), we employ Weyl's partition of the spectrum \cite{We10}: the point spectrum $\ptsp(\cL)$ is the set of all $\lambda \in \C$ such that $\cL- \lambda$ is Fredholm with zero index and non-trivial kernel; the essential specrtum, denoted as $\ess(\cL)$, is the set of all complex values $\lambda$ for which either $\cL - \lambda$ is not Fredholm, or it is Fredholm with non-zero index. In the case of a closed, densely defined operator it can be shown that $\sigma(\cL) = \ptsp(\cL) \cup \ess(\cL)$ (see, e.g., \cite{Kat80}).
This partition is useful for stability analyses because $\ess(\cL)$ is easy to compute and $\ptsp(\cL)$ comprises discrete isolated eigenvalues with finite multiplicities (see \cite{KaPro13}, chapter 2). As mentioned before, since we are computing the spectrum of $\cL$ with respect to the space $L^2 \times L^2$, our stability analysis pertains to localized, finite energy perturbations.
\end{remark}

\begin{remark}
\label{remzeroevalue}
As expected, $\lambda = 0$ belongs to $\ptsp(\cL)$. This eigenvalue is associated to the eigenfunction $(R',U')$. This follows because differentiation of the profile equations \eqref{profsyst2} yields $\cL (R', U')^\top = 0$. In this case $\lambda = 0$ is called the translation eigenvalue.
\end{remark}

By taking asymptotic limits when $x \to \pm \infty$ in the expressions for the coefficients of $\cL$ we obtain the asymptotic operators at the end states,
\begin{equation}
\label{asymop}
	\cL^\pm \begin{pmatrix}\hat{\rho}\\ \hat{u}\end{pmatrix}
	\\:=\begin{pmatrix}
	(s-U^\pm) \hat{\rho}'  - R^\pm \hat{u}'\\ $\,$ \\
	(s-U^\pm) \hat{u}' - h'(R^\pm) \hat{\rho}' + \mu ( \hat{u}'' + U^\pm (R^\pm)^{-1} \hat{\rho}'') + \tfrac{1}{2} k^2 (R^\pm)^{-1} \hat{\rho}'''
	\end{pmatrix},
\end{equation}
with $\cL^\pm : \cD(\cL)  \subset L^2(\R) \times L^2(\R) \to L^2(\R) \times L^2(\R)$. From standard stability theory, the asymptotic operators determine the algebraic curves in the complex plane that bound the essential spectrum of the original operator $\cL$ (for details, see \cite{KaPro13,San02}).

\subsection{New perturbation variables}

In this section we recast the spectral problem \eqref{eq_variable_coeff} in terms of new perturbation variables. First, we follow the fundamental observation by Goodman \cite{Go86,Go91} and use \emph{integrated variables}. For that purpose, consider
\begin{equation}
\label{intvar}
\rho(x) := \int_{-\infty}^x \hat{\rho}(\xi) \, d\xi, \qquad u(x) :=\int_{-\infty}^x\hat{u}(\xi) \, d\xi.
\end{equation}
This transformation removes the zero eigenvalue without any further modification of the spectrum and, notably, provides better energy estimates. Integrating the spectral equation \eqref{eq_variable_coeff} for any $\lambda \neq 0$ we observe that the integrated perturbation variables $\rho$ and $u$ decay exponentially as $|x| \rightarrow \infty$. Thus, expressing the spectral problem in terms of $\rho$ and $u$, and integrating \eqref{eq_variable_coeff} from $-\infty$ to $x$, we get the following equivalent system
\begin{align}
\lambda\rho &= (s-U)\rho'-Ru', \label{eq:integrated-rho}\\
\lambda u &= -h'(R)\rho'+(s-U)u'+\mu R^{-1}(Ru'+U\rho')' - \mu R^{-2}(RU)'\rho' + \nonumber\\ &\qquad + \tfrac{1}{2} k^2 R^{-\frac12}(R^{-\frac{1}{2}}\rho')''- \tfrac{1}{2} k^2R^{-\frac{3}{2}}(R^{\frac12})''\rho'. \label{eq:integrated-u}
\end{align}
Secondly, we introduce the transformation
\begin{equation}
\label{defGama}
\begin{aligned}
\Gamma &: H^3(\R) \times H^2(\R) \to H^3(\R) \times H^2(\R),\\
\Gamma \begin{pmatrix} \rho \\ u \end{pmatrix} &:= \begin{pmatrix} \rho \\ U\rho + Ru \end{pmatrix}.
\end{aligned}
\end{equation}
Since $R > 0$ and $R$ and $U$ are uniformly bounded for all $x \in \R$, it is clear that the map $\Gamma$ is invertible. Let us denote our new perturbation variables as
\[
\begin{pmatrix} \rho \\ v \end{pmatrix} := \Gamma \begin{pmatrix} \rho \\ u \end{pmatrix} \in H^3(\R) \times H^2(\R),
\]
so that
\begin{equation}\label{eq:v}
	v(x) = U(x)\rho(x) + R(x)u(x), \qquad \qquad \mbox{ a.e. in } \R.
\end{equation}
Now, let us assume that $\lambda \neq 0$. Upon substitution and differentiation we obtain
\begin{equation}
\label{eq:du}
	Ru' = -U\rho' + v' - R\left(\frac{U}{R}\right)'\rho - \frac{R'}{R}v,
\end{equation}
and, as a consequence, equation \eqref{eq:integrated-rho} can be rewritten as
\begin{equation}
\label{eq:rho}
	\lambda \rho=s\rho'-v'+R\left(\frac{U}{R}\right)'\rho+\frac{R'}{R}v.
\end{equation}
On the other hand, multiplying \eqref{eq:v} by $\lambda \neq 0$, we infer $\lambda v=\lambda Ru+\lambda U\rho$, 
and using \eqref{eq:integrated-u}, \eqref{eq:rho} and \eqref{eq:du}, we arrive at the following spectral problem
\begin{align}
	\lambda \rho &= s\rho'-v'+g\rho+\frac{R'}{R}v, \label{eq:rho-final}\\
	\lambda v &= -f_1\rho'+f_2v'+\mu v''-f_2g\rho- \frac{R'}{R}f_2v -2\mu U'\rho' + \notag \\
	&\qquad -\mu\frac{R'}{R}v'-\mu g'\rho -\mu\left(\frac{R'}{R}\right)'v+k^2 \cL_Q\rho, \label{eq:v-final}
\end{align}
where we have introduced the functions
\begin{equation}
\label{deffg}
\begin{aligned}
	f_1(x)&:=R(x)h'(R(x))-U(x)^2, \\
	f_2(x)&:=s-2U(x),\\
	g(x)&:=R(x)\left(\frac{U(x)}{R(x)}\right)' ,
\end{aligned}
\end{equation}
and the operator $\cL_Q : \cD(\cL_Q) = H^3(\R) \subset L^2(\R) \to L^2(\R)$ is defined by
\begin{equation*}
\cL_Q\rho:=\frac12R^{\frac12}(R^{-\frac12}\rho')''-\frac12R^{-\frac12}(R^{\frac12})''\rho'.
\end{equation*}
For later use, notice that this operator can be recast as
\begin{equation}\label{eq:L_Q}
	\cL_Q \rho := \tfrac{1}{2} \rho'''- \tfrac{1}{2} \Big( \frac{R'}{R}\Big) \rho'' - \tfrac{1}{2} \Big( \frac{R'}{R} \Big)' \rho'. 
\end{equation}
Hence, the new (yet equivalent) spectral problem \eqref{eq:rho-final} and \eqref{eq:v-final} has the form
\[
\begin{aligned}
\cA &: \cD(\cL) \subset L^2(\R) \times L^2(\R) \to L^2(\R) \times L^2(\R),\\
\cA \begin{pmatrix} \rho \\ v \end{pmatrix} &:= \begin{pmatrix} s\rho'-v'+g\rho+\frac{R'}{R}v \\ \, \\ -f_1\rho'+f_2v'+\mu v''-f_2g\rho- \frac{R'}{R}f_2v -2\mu U'\rho' + \\ -\mu\frac{R'}{R}v'-\mu g'\rho -\mu\left(\frac{R'}{R}\right)'v+k^2 \cL_Q\rho\end{pmatrix},
\end{aligned}
\]
where the auxiliary (integrated) operator $\cA$ is endowed with the following property: the point spectrum of the original operator $\cL$ is contained in the point spectrum of $\cA$, except for the eigenvalue zero.
\begin{lemma}
$\ptsp(\cL) \backslash \{ 0 \} \subset \ptsp(\cA)$.
\label{lemequivsp}
\end{lemma}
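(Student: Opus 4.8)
The plan is to prove the inclusion $\ptsp(\cL) \backslash \{0\} \subset \ptsp(\cA)$ by chasing the two transformations that link $\cL$ and $\cA$ through the eigenvalue problem, and checking that each preserves membership in the point spectrum. Recall that $\cA$ is obtained from $\cL$ by first integrating the spectral equation once (the Goodman trick \eqref{intvar}) and then applying the invertible change of variables $\Gamma$ of \eqref{defGama}. So suppose $\lambda \in \ptsp(\cL)$ with $\lambda \neq 0$; I must produce a nontrivial $(\rho,v) \in \cD(\cL)$ solving $\cA(\rho,v)^\top = \lambda(\rho,v)^\top$ and, separately, verify that $\cA - \lambda$ is Fredholm of index zero at such $\lambda$.

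For the eigenfunction part, I would proceed as follows. Let $(\hat\rho,\hat u) \in H^3 \times H^2$ be a nontrivial solution of $\cL(\hat\rho,\hat u)^\top = \lambda (\hat\rho,\hat u)^\top$. Since $\lambda \neq 0$, integrating the first component of \eqref{eq_variable_coeff} gives $\lambda \int_{-\infty}^x \hat\rho = [\,(s-U)\hat\rho - R\hat u\,]_{-\infty}^x$; because $(\hat\rho,\hat u) \in H^3 \times H^2$ decays at $-\infty$ (Sobolev embedding) and the bracketed quantity is a multiple of $\lambda \rho$, one shows $\rho(x) := \int_{-\infty}^x \hat\rho$ and $u(x) := \int_{-\infty}^x \hat u$ are well-defined, lie in $H^3$ and $H^2$ respectively (each derivative of $\rho$ is $\hat\rho$ or a combination of $\hat\rho,\hat u$ and their derivatives divided by $\lambda$), and actually decay exponentially as $|x|\to\infty$ by the same argument applied at $+\infty$ together with the fact that $\lambda$ is outside the essential spectrum (so no polynomially-bounded non-decaying solutions appear). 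The integrated pair solves \eqref{eq:integrated-rho}--\eqref{eq:integrated-u} by construction. Then set $(\rho,v)^\top := \Gamma(\rho,u)^\top$; since $\Gamma$ is a bounded invertible map on $H^3 \times H^2$ with the profiles as (smooth, bounded, bounded-away-from-zero) coefficients, $(\rho,v) \in \cD(\cL)$ is nontrivial. The algebra carried out in \eqref{eq:du}--\eqref{eq:v-final} shows precisely that $(\rho,v)$ solves $\cA(\rho,v)^\top = \lambda(\rho,v)^\top$. Hence $\ker(\cA-\lambda) \neq \{0\}$.

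It remains to check that $\cA - \lambda$ is Fredholm with index zero for every $\lambda \in \ptsp(\cL)\setminus\{0\}$; this, together with the nontrivial kernel, gives $\lambda \in \ptsp(\cA)$. The clean way is to observe that the two operations relating $\cL$ and $\cA$ do not move the essential spectrum: integration is a standard conjugation that, away from $\lambda = 0$, leaves $\ess$ unchanged (the asymptotic operators $\cL^\pm$ in \eqref{asymop} have the same dispersion relations before and after integrating, since at the constant end states differentiation and the integral are inverse up to the factor $\lambda$), and $\Gamma$ together with $\Gamma^{-1}$ are bounded, so conjugating by it is a relatively compact (in fact bounded) perturbation that preserves Fredholmness and index. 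Concretely, for $\lambda \notin \ess(\cL)$ one has $\lambda \notin \ess(\cA)$, so $\cA - \lambda$ is Fredholm of index zero; combined with the previous paragraph, $\lambda \in \ptsp(\cA)$.

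The main obstacle I anticipate is the rigorous justification that the integrated variables $\rho, u$ genuinely belong to $L^2$ (equivalently, $H^3 \times H^2$) and decay at $+\infty$: a priori $\int_{-\infty}^x \hat\rho$ need only be bounded, and one must rule out the possibility that it tends to a nonzero constant. This is exactly the point where one uses $\lambda \neq 0$ and $\lambda \notin \ess(\cL)$ — the relation $\lambda\rho = (s-U)\hat\rho - R\hat u + \text{const}$ forces the integration constant to vanish and gives $\rho = \lambda^{-1}[(s-U)\hat\rho - R\hat u]$, which inherits the decay and Sobolev regularity of $(\hat\rho,\hat u)$. The remaining computations \eqref{eq:du}--\eqref{eq:v-final} identifying $\cA$ are routine but need the profile equations \eqref{profsyst2} to be substituted carefully; I would present these as already done in the derivation above and simply invoke them.
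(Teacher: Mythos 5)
Your argument is correct and takes essentially the same route as the paper: since $\lambda \neq 0$, integrating the spectral equation gives the pointwise identities $\lambda\rho=(s-U)\hat{\rho}-R\hat{u}$ (and the analogous one for $u$), so the integrated variables lie in $\cD(\cL)$ and solve the integrated system, and applying the invertible map $\Gamma$ then yields an eigenfunction of $\cA$. Your additional verification that $\cA-\lambda$ is Fredholm of index zero is a harmless extra that the paper leaves implicit, since the subsequent energy estimate only uses the existence of a nontrivial kernel.
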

\begin{proof}
Suppose that $\lambda \in \ptsp(\cL)$, $\lambda \neq 0$ with eigenfunction $(\hat{\rho}, \hat{u}) \in H^3 \times H^2$. Define $\rho$ and $u$ as the antiderivatives of $\hat{\rho}$ and $\hat{u}$ in \eqref{intvar}, respectively. Clearly, $\rho' \in H^3(\R)$ and $u' \in H^2(\R)$. In view that $\lambda \neq 0$, integration of \eqref{eq_variable_coeff} yields the exponential decay of $(\rho,u)$ as $|x|\rightarrow \infty$. This shows that $\rho, u \in L^2(\R)$. We conclude that $(\rho, u) \in \cD(\cL)$ and solve the integrated system \eqref{eq:integrated-rho} and \eqref{eq:integrated-u}. Since $\lambda \neq 0$, this implies that $(\rho,v) := \Gamma (\rho,u)$ belongs to $\cD(\cL)$ as well, and solves the spectral problem $\cA (\rho,v) = \lambda (\rho,v)$. The lemma is proved.
\end{proof}

\subsection{Estimates on the coefficients}

In this section we gather some further properties of the linearized operator around the profile. More precisely, we establish some estimates on the coefficients which are useful to close energy estimates in the forthcoming section.

\begin{lemma}
\label{lem:f_1,g}
Suppose $\gamma > 1$, $R^- > 0$  and $s \in (0, 2 c_s(R^-))$. 
If the shock amplitude, $\e=R^- - R^+ > 0$, is sufficiently small, then there exists a uniform constant $C > 0$ (independent of $\e$) such that 
\begin{equation}
\label{allproperties}
	\begin{aligned}
		0< C^{-1} \leq f_1(x)& \leq C, \\
		|f'_1(x)|& \leq C |R'(x)|, \\
		|f''_1(x)|&\leq C \e|R'(x)|,\\
		g(x)&\leq C^{-1} R'(x)<0, \\
		|g'(x)|&\leq C \e|R'(x)|, 
	\end{aligned}
\end{equation}
for any $x\in\R$.
\end{lemma}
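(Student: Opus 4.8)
The plan is to use the explicit expressions for $f_1$, $f_2$, $g$ from \eqref{deffg} together with the profile relation \eqref{eq:U-R} and the monotonicity estimates \eqref{eq:prop-R} and \eqref{eq:est-U} established in Proposition \ref{lem:R-prop} and the subsequent remark. First I would note that since $R(x) \in [R^+, R^-]$ lies in a compact interval bounded away from zero (uniformly in $\e$, because $R^- - R^+ = \e$ is small and $R^- > 0$ is fixed), all the coefficient functions and their $R$-derivatives are evaluated on a compact set, so continuity immediately gives boundedness; the content of the lemma is the \emph{sign} of $g$, the lower bound $C^{-1}$ on $f_1$, and the way the derivative bounds scale with $|R'|$ and with $\e|R'|$.

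For the derivative bounds, I would differentiate the defining formulas directly. Using \eqref{eq:U-R} one has $U' = A R'/R^2$, so $U' = O(\e^2)$ by \eqref{eq:prop-R} and in fact $U'$ is a smooth bounded function of $R$ times $R'$; thus $f_1' = (R h'(R))' - 2 U U' = \big( h'(R) + R h''(R) - 2 U A R^{-2} \big) R'$, which is (bounded function of $R$) $\times R'$, giving $|f_1'| \le C|R'|$. Differentiating once more, every term is either (bounded function of $R$)$\times R''$ or (bounded function of $R$)$\times (R')^2$; by the second estimate in \eqref{eq:prop-R}, $|R''| \le C\e|R'|$, and by the first estimate $(R')^2 \le C\e^2 |R'| \le C\e|R'|$, so $|f_1''| \le C\e|R'|$. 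The same differentiation scheme applied to $g(x) = R (U/R)' = U' - U R'/R = (A R^{-2} - U R^{-1}) R'$ yields $|g'| \le C\e|R'|$ by the identical two-term analysis (one term with $R''$, one with $(R')^2$). I would record $f_2 = s - 2U$ and its derivative $f_2' = -2U' = O(\e^2)$ in passing, since $f_2$ appears in the operator $\cA$ even though the lemma statement as transcribed emphasizes $f_1$ and $g$.

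The two genuinely structural points are the lower bound on $f_1$ and the sign of $g$. For $f_1 = R h'(R) - U^2 = c_s(R)^2 - U^2$ (using \eqref{eq:sound-speed}), I would use that at the end states $f_1(R^\pm, U^\pm) = c_s(R^\pm)^2 - (U^\pm)^2 > 0$ precisely when the state is subsonic; the hypothesis $s \in (0, 2c_s(R^-))$ together with the Lax condition and the expansions \eqref{expansion_R0}, \eqref{expansion_A} forces both end states to be subsonic for $\e$ small (indeed $U^- = s - A/R^- \to s - c_s(R^-)$, so $|U^-| < c_s(R^-)$ iff $0 < s < 2c_s(R^-)$, and $R^+, U^+$ are $\e$-close to $R^-, U^-$), hence $f_1$ is bounded below by a positive constant at both ends; since $f_1$ is continuous and $R, U$ range over a compact $\e$-small neighborhood of the end states, $f_1 \ge C^{-1} > 0$ everywhere for $\e$ small. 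For the sign of $g$: from $g = (A R^{-2} - U R^{-1})R' = R^{-2}(A - UR)R'$, and $A - UR = A - (s - A/R)R = 2A - sR$; at the end states $2A - sR^\pm = 2(s - U^\pm)R^\pm - sR^\pm = (s - 2U^\pm)R^\pm$, and using $U^- \approx s - c_s(R^-)$ one gets $s - 2U^- \approx 2c_s(R^-) - s > 0$, so $A - UR$ is bounded below by a positive constant for $\e$ small; since $R' < 0$ (case $s>0$, Proposition \ref{lem:R-prop}(a)) and $R^{-2}$ is bounded below, we conclude $g(x) \le C^{-1} R'(x) < 0$. The main obstacle is bookkeeping: making sure that the ``bounded function of $R$'' coefficients in all the differentiated expressions are genuinely bounded \emph{uniformly in $\e$}, which follows because they are smooth functions of $R$ restricted to the fixed compact set $[R^- - \e_0, R^-] \subset (0,\infty)$, and that the positivity margins for $f_1$ and $g$ survive the $O(\e)$ perturbation from the true end state $R^-$; both reduce to the subsonicity encoded in $s \in (0, 2c_s(R^-))$, which is exactly the hypothesis imposed.
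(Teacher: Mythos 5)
Your proposal is correct and follows essentially the same route as the paper's proof: both rewrite $f_1$ and $g$ as smooth functions of $R$ (via $U=s-A/R$), use the expansion $A=R^-c_s(R^-)+O(\e)$ and the subsonicity condition $s\in(0,2c_s(R^-))$ to get the uniform positive lower bounds $f_1\geq C^{-1}$ and $2A-sR\geq C^{-1}>0$ (hence the sign of $g$ from $R'<0$), and invoke the monotonicity estimates $|R'|\leq C\e^2$, $|R''|\leq C\e|R'|$ to obtain the stated derivative bounds. The only difference is cosmetic: the paper first substitutes $U$ and works with $F_1(A,R)$, $G(A,R)$, whereas you differentiate $f_1=c_s(R)^2-U^2$ directly; the content is identical.
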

\begin{proof}
See Appendix \ref{apendice}.
\end{proof}
Next, we define the functions
\begin{align}
a(x) &:= \frac{1}{2}\left(\frac{f_2(x)}{f_1(x)}\right)' + \frac{R'(x) f_2(x)}{R(x) f_1(x)}, \label{eq:f_a2}\\
b(x) &:= \frac{R'(x)}{R(x)} - \frac{f_2(x) g(x)}{f_1(x)}. \label{eq:f_b}
\end{align}
and we prove the following result.
\begin{lemma}
\label{lemma:choice}
Suppose $\gamma > 1$, $R^- > 0$  and $s \in (0, 2 c_s(R^-))$. Then, we can choose $\eta > 0$ so that the following holds:
if the shock amplitude, $\e=R^--R^+$, is sufficiently small, then there exist uniform constants $C_1,C_2> 0$ (independent of $\varepsilon$) such that
\begin{align}
	-g(x) - \frac{|b(x)|}{2 \eta} &\geq C_1 |R'(x)|, \label{est_hyperbolic1}\\
	a(x) - \frac{\eta}{2}|b(x)| &\geq C_2 |R'(x)|, \label{est_hyperbolic2}
\end{align}
for any $x \in \mathbb{R}$.
\end{lemma}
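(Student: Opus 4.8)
The plan is to exploit relation \eqref{eq:U-R}, which renders $U$ — and hence both $f_1$ and $f_2$ — an explicit function of $R$ alone, in order to extract the leading-order behaviour (as the shock amplitude $\e\to0$) of $a$, $b$ and $g$ as multiples of $|R'|$, and then to select $\eta$ so that the coefficients of $|R'|$ appearing in \eqref{est_hyperbolic1}--\eqref{est_hyperbolic2} become strictly positive; the requirement $\gamma>1$ will fall out of a clean algebraic identity.

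Writing $c^-:=c_s(R^-)$, I would first record the consequences of $U=s-A/R$: the identities $f_2=-s+2A/R$ and $f_1=\gamma R^{\gamma-1}-(s-A/R)^2$ (both functions of $R$, with $A$ as a parameter), together with $g=(R'/R)f_2$, obtained by a one-line differentiation of $U/R$; substituting the latter into \eqref{eq:f_b} gives $b=(R'/R)(f_1-f_2^2)/f_1$. Using $R'<0$ and $R(x)\in(R^+,R^-)$ from Proposition \ref{lem:R-prop}(a), as well as $f_2>0$ and $f_1\geq C^{-1}>0$ for small $\e$ (Lemma \ref{lem:f_1,g}), I would then write
\[
-g=\frac{f_2}{R}\,|R'|,\qquad |b|=\frac{|f_1-f_2^2|}{Rf_1}\,|R'|,\qquad a=-\Phi(R)\,|R'|,
\]
where $\Phi(R):=\tfrac12\tfrac{d}{dR}(f_2/f_1)+f_2/(Rf_1)$ is smooth near $R=R^-$ (here $(f_2/f_1)'=\tfrac{d}{dR}(f_2/f_1)\cdot R'$ by the chain rule, since along a fixed profile $A$ is constant). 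As $\e\to0$ one has $R(x)\to R^-$ uniformly in $x$ (Proposition \ref{lem:R-prop}) and $A\to R^- c^-$ (by \eqref{expansion_A}); moreover $f_1(R^-)=(c^-)^2-(s-c^-)^2=s(2c^- - s)>0$, $f_2(R^-)=2c^- - s>0$, $f_1(R^-)-f_2(R^-)^2=2(2c^- - s)(s-c^-)$, $f_2'(R^-)=-2c^-/R^-$ and $f_1'(R^-)=c^-((\gamma+1)c^- - 2s)/R^-$. Hence the three coefficients above converge, uniformly in $x$, to the constants
\[
\frac{2c^- - s}{R^-},\qquad \frac{2|s-c^-|}{R^- s},\qquad -\Phi(R^-)=\frac{(\gamma+1)(c^-)^2-2s(2c^- - s)}{2R^- s^2(2c^- - s)}.
\]
Because $s\in(0,2c^-)$ forces $2s(2c^- - s)\leq 2(c^-)^2$, the hypothesis $\gamma>1$ gives $(\gamma+1)(c^-)^2>2(c^-)^2\geq 2s(2c^- - s)$, so $-\Phi(R^-)>0$.

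The last step is the choice of $\eta$. Replacing each quantity in \eqref{est_hyperbolic1}--\eqref{est_hyperbolic2} by its limiting coefficient times $|R'|$, the leading-order inequalities become $\tfrac{2c^- - s}{R^-}-\tfrac{1}{2\eta}\tfrac{2|s-c^-|}{R^- s}>0$ and $-\Phi(R^-)-\tfrac{\eta}{2}\tfrac{2|s-c^-|}{R^- s}>0$, that is,
\[
\frac{|s-c^-|}{s(2c^- - s)}<\eta<\frac{(\gamma+1)(c^-)^2-2s(2c^- - s)}{2s(2c^- - s)\,|s-c^-|}
\]
(the right-hand bound being read as $+\infty$ when $s=c^-$). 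Cross-multiplying, this window is nonempty exactly when $2(s-c^-)^2+2s(2c^- - s)<(\gamma+1)(c^-)^2$; but the left-hand side equals identically $2(c^-)^2$, so the condition collapses to precisely $\gamma>1$. I would therefore fix any $\eta$ in this interval. Then the two limiting bracketed coefficients are strictly positive, and by the uniform-in-$x$ convergence established above there exist $\e$-independent constants $C_1,C_2>0$ such that, for $\e$ small enough, $\tfrac{f_2}{R}-\tfrac{1}{2\eta}\tfrac{|f_1-f_2^2|}{Rf_1}\geq C_1$ and $-\Phi(R)-\tfrac{\eta}{2}\tfrac{|f_1-f_2^2|}{Rf_1}\geq C_2$ for all $x\in\R$; multiplying through by $|R'(x)|\geq 0$ yields \eqref{est_hyperbolic1} and \eqref{est_hyperbolic2}.

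The hard part is the explicit computation of the leading coefficient $-\Phi(R^-)$ of $|R'|$ in $a$, and then the bookkeeping that collapses nonemptiness of the admissible $\eta$-window into the single hypothesis $\gamma>1$; by contrast, once the identity $g=(R'/R)f_2$ is in hand — so that $f_1$, $f_2$, and hence $a$, $b$, become functions of $R$ alone — the $\e\to0$ limits and the closing continuity argument are routine.
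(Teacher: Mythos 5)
Your proposal is correct and follows essentially the same route as the paper: writing $g$, $a$, $b$ as smooth functions of $(A,R)$ times $R'$, evaluating at the limiting state $(A^-,R^-)$ (your limiting coefficients $\tfrac{2c^- - s}{R^-}$, $\tfrac{2|s-c^-|}{R^- s}$, $-\Phi(R^-)$ coincide with the paper's $q_1$, $|q_3|$, $q_2$), choosing $\eta$ in the nonempty window whose existence collapses exactly to $\gamma>1$, and concluding by uniform continuity for small $\e$. The only cosmetic difference is that you simplify $b$ via $g=(R'/R)f_2$ to get the coefficient $(f_1-f_2^2)/(Rf_1)$ explicitly, which the paper leaves as $B(A,R)$.
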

\begin{proof}
See Appendix \ref{apendice}.
\end{proof}
Finally, we conclude this section by denoting
\begin{equation}
\label{eq:f_a3}
	\omega(x):=\frac{s}{2}\left( \frac{1}{f_1(x)} \right)' - \frac{g(x)}{f_1(x)},
\end{equation}
and 
\begin{equation}
\label{eq:F}
	\kappa(\gamma) := \begin{dcases}
	\frac{5 - \sqrt{7 - 2 \gamma}}{2}, & 1\leq \gamma \leq 3,\\
	2, & \gamma > 3.
	\end{dcases} 
\end{equation}
We have the following result.
\begin{lemma}
\label{lemdest}
Assume $\gamma > 1$, $R^- > 0$  and $s \in (0, \kappa(\gamma)c_s(R^-))$. If the shock amplitude, $\vep = R^--R^+$, is sufficiently small then there exists a uniform constant $C_3>0$ (independent of $\varepsilon$) such that
\begin{equation}\label{eq:a_3-order}
	\omega(x)\geq C_3 |R'(x)|,
\end{equation}
for any $x \in \mathbb{R}$.
\end{lemma}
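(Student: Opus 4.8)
The plan is to put $\omega$ in closed form in terms of the profiles $R$, $U$ and the single derivative $R'$, reducing \eqref{eq:a_3-order} to the strict positivity of a scalar quantity $\Phi(x)$, and then to control $\Phi$ uniformly in $x$ by passing to the small-amplitude limit. For the first step, I would use that $U = s - A/R$ by \eqref{eq:U-R}, so $U' = (s-U)R'/R$; feeding this into \eqref{deffg} gives the identity $g = (s-2U)R'/R$, and, recalling from \eqref{eq:ent}, \eqref{eq:sound-speed} that $Rh'(R) = \gamma R^{\gamma-1} = c_s(R)^2$, one obtains $f_1 = c_s(R)^2 - U^2$ and $f_1' = \tfrac{R'}{R}\big[(\gamma-1)c_s(R)^2 - 2U(s-U)\big]$. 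Substituting into \eqref{eq:f_a3}, using $(1/f_1)' = -f_1'/f_1^2$ and $R' = -|R'|$ (we are in the case $s>0$, where $R$ is monotone decreasing by Proposition \ref{lem:R-prop}), I obtain
\[
\omega = \frac{|R'|}{R\, f_1^2}\,\Phi, \qquad \Phi := \frac{s}{2}\big[(\gamma-1)c_s(R)^2 - 2U(s-U)\big] + (s-2U)\big(c_s(R)^2 - U^2\big).
\]
Since $R \in [R^+, R^-]$ by monotonicity and $C^{-1}\le f_1\le C$ by Lemma \ref{lem:f_1,g} (which applies because $\kappa(\gamma)\le 2$), it then suffices to prove that $\Phi(x)\ge\delta>0$ for all $x\in\R$, with $\delta$ independent of $\vep$.

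To get this I would pass to the limit $\vep\to 0^+$. By Proposition \ref{lem:R-prop} we have $R(x)\in[R^--\vep,R^-]$ for every $x$, so $R(x)=R^-+O(\vep)$ uniformly; combined with \eqref{expansion_A} this yields $U(x)=s-A/R(x)=s-c_s(R^-)+O(\vep)$ and $c_s(R(x))^2=c_s(R^-)^2+O(\vep)$, uniformly in $x$. As $\Phi$ is a fixed polynomial in $U$ and $c_s(R)^2$ with coefficients depending only on the fixed $s$, it follows that $\Phi(x)=\Phi_0+O(\vep)$ uniformly, where, writing $c_0:=c_s(R^-)$, a direct expansion about $U=s-c_0$, $c_s(R)^2=c_0^2$ gives the factorization
\[
\Phi_0 = s\Big(s^2 - 5sc_0 + \tfrac{\gamma+9}{2}c_0^2\Big).
\]

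It then remains to check that $q(s):=s^2-5sc_0+\tfrac{\gamma+9}{2}c_0^2$ is strictly positive exactly when $s\in(0,\kappa(\gamma)c_0)$. Its discriminant is $(7-2\gamma)c_0^2$ and its vertex is at $s=\tfrac52 c_0$. For $1<\gamma\le 3$ the roots are real and the smaller one is precisely $\kappa(\gamma)c_0=\tfrac12(5-\sqrt{7-2\gamma})c_0$, so $q>0$ on $(0,\kappa(\gamma)c_0)$; for $\gamma>3$, where $\kappa(\gamma)=2$, the function $q$ is decreasing on $[0,\tfrac52 c_0]\supset[0,2c_0]$ and $q(2c_0)=\tfrac{\gamma-3}{2}c_0^2>0$, so $q>0$ on $(0,2c_0)$. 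In either case $\Phi_0=s\,q(s)>0$. Setting $\delta:=\tfrac12\Phi_0$, for $\vep$ small enough the uniform bound gives $\Phi(x)\ge\delta$ for all $x$, hence $\omega(x)\ge C_3|R'(x)|$ with $C_3:=\delta(R^-C^2)^{-1}>0$ independent of $\vep$, which is \eqref{eq:a_3-order}.

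The calculations above are routine; the points that need care are that the reduction of $\omega$ to the scalar $\Phi$ genuinely hinges on eliminating $U'$ through $U=s-A/R$ (so that $\Phi$ depends only on $R$, $U$ and $R'$), and that the factorization of the limiting polynomial $\Phi_0$ reproduce exactly the threshold $\kappa(\gamma)$ of \eqref{eq:F}. The only structural subtlety, albeit a mild one, is the uniformity of the limit $\Phi\to\Phi_0$: this relies on the monotonicity of the profile from Proposition \ref{lem:R-prop}, which confines $R$ to the interval $[R^+,R^-]$ shrinking to $\{R^-\}$, rather than merely on the fact that $R$ stays away from vacuum.
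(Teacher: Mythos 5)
Your proposal is correct and follows essentially the same route as the paper: both write $\omega$ as a coefficient times $R'$, evaluate that coefficient at the limiting end state (your $\Phi_0=s\,q(s)$ is, up to the explicit positive prefactor $1/(R f_1^2)$ that the paper keeps inside its function $D(A,R)$, the same quadratic $2s^2-10c_s(R^-)s+(\gamma+9)c_s(R^-)^2$ whose sign yields exactly the threshold $\kappa(\gamma)$), and then use smallness of $\vep$ together with monotonicity and Lemma \ref{lem:f_1,g} to make the bound uniform in $x$. The only differences are cosmetic bookkeeping (working with $(R,U)$ instead of $(A,R)$), so no further comparison is needed.
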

\begin{proof}
See Appendix \ref{apendice}.
\end{proof}

\section{Spectral stability}
\label{sec:spectral}


In this section we establish the spectral stability of sufficiently weak dispersive shock profiles. 

\subsection{Stability of the essential spectrum}

Lattanzio and Zhelyazov \cite{LaZ21b} proved that the essential spectrum of $\cL$ is stable provided that the end states are subsonic or sonic (see Proposition 2.1 in that reference). If one of the end states is subsonic and the shock amplitude is sufficiently small, then the subsonicity of the other end state is guaranteed, provided that the shock speed satisfies a certain bound. This is the content of the following result, whose proof is very similar to that of the linear viscosity case (see Lemma 3.3 in \cite{FPZ-press}) and we omit it.
\begin{lemma}
\label{lemessspect}
Assume $R^- > 0$ and $s \in (0,2c_s(R^-))$. 
If the shock amplitude, $\e = R^--R^+ > 0$, is sufficiently small then the $L^2$-essential spectrum of the linearized operator around the shock profile is stable. 
More precisely,
\[
\ess(\cL) \subset \{\lambda \in \mathbb{C} \, : \, \Re \lambda < 0\} \cup \{0\}.
\]
\end{lemma}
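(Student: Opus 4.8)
The plan is to prove Lemma \ref{lemessspect} by reducing it, via the smallness of the shock amplitude, to the subsonicity criterion already established in \cite{LaZ21b}. The starting point is the cited result of Lattanzio and Zhelyazov (Proposition 2.1 in that reference), which asserts that $\ess(\cL) \subset \{\Re\lambda < 0\} \cup \{0\}$ whenever both end states $(R^\pm, U^\pm)$ are subsonic or sonic, i.e. $|U^\pm| \le c_s(R^\pm)$. Thus the entire task is to show that, under the hypotheses $s \in (0, 2c_s(R^-))$ and $0 < \e = R^- - R^+ \ll 1$, both end states satisfy this inequality. One end state is subsonic by the standing assumption in Theorem \ref{theoex} (case (i) or (ii)); the work is to verify subsonicity of the \emph{other} end state in the weak-shock regime.

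First I would recall that the essential spectrum of $\cL$ is governed by the asymptotic operators $\cL^\pm$ in \eqref{asymop}, whose symbols depend only on the constant end states $(R^\pm, U^\pm, s)$; hence stability of $\ess(\cL)$ is a purely algebraic condition on these constants, and it suffices to track how the end states depend on $\e$. Using the relation \eqref{eq:U-R}, namely $U^\pm = s - A/R^\pm$, together with the expansion \eqref{expansion_A} for the constant $A$ as a function of $\e$ (with $R^+ = R^- - \e$), I would Taylor-expand $U^\pm$ and $c_s(R^\pm) = \sqrt{\gamma (R^\pm)^{\gamma-1}}$ around $\e = 0$. At $\e = 0$ one has $R^+ = R^- $, $A = R^- c_s(R^-)$, and therefore $U^+ = U^- = s - c_s(R^-)$, so that $|U^\pm| = |s - c_s(R^-)|$; the condition $s \in (0, 2c_s(R^-))$ is exactly what makes $|s - c_s(R^-)| < c_s(R^-)$, i.e. the limiting end state is \emph{strictly} subsonic. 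The strict inequality at $\e = 0$ is stable under the $O(\e)$ perturbation of both $U^\pm$ and $c_s(R^\pm)$, so for $\e$ small enough the end state that is not assumed subsonic a priori becomes subsonic as well, and the cited proposition applies.

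The main (and essentially only) technical point is to make the perturbative argument uniform and self-contained: one must check that the expansion \eqref{expansion_A} indeed holds (it follows from the explicit formula for $A$ in terms of $R^\pm$ and the enthalpy \eqref{eq:ent}, by a routine Taylor expansion in $\e$, valid for $\gamma > 1$ so that $h$ is smooth at $R^-$), and then that $|U^\pm|^2 - c_s(R^\pm)^2$, viewed as a smooth function of $\e$ near $\e = 0$, is strictly negative at $\e = 0$ under $s \in (0, 2c_s(R^-))$ and hence on a whole interval $(0, \e_0)$ by continuity. I expect no real obstacle here: the structure mirrors exactly the linear-viscosity case treated in \cite[Lemma 3.3]{FPZ-press}, and the only difference between the two models at the level of the \emph{asymptotic} operators is the precise form of the highest-order coefficients, which do not affect the essential-spectrum bound once subsonicity is in place. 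Accordingly, I would state that the proof proceeds along the lines of \cite[Lemma 3.3]{FPZ-press} — invoking \cite[Prop. 2.1]{LaZ21b} for the subsonic/sonic case and the expansions \eqref{eq:U-R}, \eqref{expansion_A} to propagate subsonicity to the second end state — and omit the routine computation, as the excerpt already indicates.
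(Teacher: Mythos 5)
Your proposal is correct and follows essentially the same route the paper intends: the paper omits the proof precisely because it reduces, as you say, to invoking Proposition 2.1 of \cite{LaZ21b} (stability of $\ess(\cL)$ for subsonic/sonic end states, an algebraic condition on the asymptotic operators \eqref{asymop}) and then propagating strict subsonicity to both end states for small $\e$ via \eqref{eq:U-R} and \eqref{expansion_A}, exactly as in Lemma 3.3 of \cite{FPZ-press}. The only cosmetic remark is that no a priori subsonicity of either state needs to be borrowed from Theorem \ref{theoex}: your own computation, $U^\pm \to s - c_s(R^-)$ as $\e \to 0$ with $|s - c_s(R^-)| < c_s(R^-)$ iff $s \in (0,2c_s(R^-))$, already yields subsonicity of both end states at once.
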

\begin{remark}
Similarly to the linear viscosity case \cite{FPZ-press}, it suffices to assume $R^- > 0$, $\vep = R^- - R^+ > 0$ sufficiently small and $s \in (0,2c_s(R^-))$ to stabilize the essential spectrum. This happens because the essential spectrum is bounded to the left of the Fredholm borders $\Sigma_\pm = \{ \lambda = \lambda_\pm(\xi) \in \C \, : \, \xi \in \R \}$, where the functions $\lambda = \lambda_\pm(\xi)$ are determined by the dispersion relations of the asymptotic operators $\cL_\pm$ defined in \eqref{asymop} (see equation (2.9) in \cite{LaZ21b}) and they have the same structure as their linear viscosity counterparts. What is important to remark here is that, also like in the linear viscosity case, there is accumulation of the essential spectrum near the origin (that is, there is no \emph{spectral gap}; see Figure 1 in \cite{LaZ21b}) and, consequently, it is not possible to apply standard exponentially decaying semigroup techniques to study the nonlinear stability of the profiles.
\end{remark}

\subsection{Energy estimates and the point spectrum}

In this section we establish an energy estimate to show that the auxiliary operator $\cA$ has stable point spectrum. The particular form of the new spectral problem, \eqref{eq:rho-final} and \eqref{eq:v-final}, the monotonicity of the density profile (Proposition \ref{lem:R-prop}), as well as the bounds for the coefficients stated in Lemmata \ref{lem:f_1,g}, \ref{lemma:choice} and \ref{lemdest}, play a key role along the proof.

\begin{lemma}[energy estimate]
\label{lemeest}
Assume $\gamma > 1$, $R^- > 0$  and $s\in(0, \kappa(\gamma)c_s(R^-))$, where $\kappa(\gamma)$ is defined in \eqref{eq:F}. 
Let the shock amplitude, $\e = R^- - R^+ > 0$, be sufficiently small. Then the operator $\cA$ is point spectrally stable, that is, $\ptsp(\cA) \subset \{ \lambda \in \C \, : \, \Re \lambda < 0\}$.
\end{lemma}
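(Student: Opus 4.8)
The plan is to prove $\ptsp(\cA) \subset \{\Re\lambda < 0\}$ by a weighted energy estimate carried out directly on the equivalent spectral system \eqref{eq:rho-final}--\eqref{eq:v-final}. Suppose $\lambda \in \ptsp(\cA)$ with eigenfunction $(\rho, v) \in H^3(\R) \times H^2(\R)$, $(\rho,v) \neq 0$; by the exponential decay of integrated eigenfunctions all boundary terms at $x = \pm\infty$ vanish upon integration by parts. The key is to test \eqref{eq:rho-final} and \eqref{eq:v-final} against carefully chosen multipliers, take real parts, and assemble a combination in which the left-hand side is $\Re\lambda$ times a positive definite quadratic form in $(\rho, \rho', v)$ (or a suitable weighted version), while the right-hand side is manifestly $\leq 0$, with equality forcing $(\rho,v) \equiv 0$. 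Concretely, I would multiply \eqref{eq:v-final} by $\overline{v}/f_1$ (to symmetrize the $-f_1\rho' $ and to pair it against the $-v'$ term in the $\rho$-equation multiplied by $\overline{\rho}$), integrate, and add an appropriate multiple of \eqref{eq:rho-final} tested against $(s\overline{\rho}\,)/f_1$ or against $\omega\,\overline{\rho}$; the functions $a$, $b$, $\omega$ defined in \eqref{eq:f_a2}, \eqref{eq:f_b}, \eqref{eq:f_a3} are exactly the coefficients that emerge from these integrations by parts, and Lemmata \ref{lemma:choice} and \ref{lemdest} are precisely what guarantees the resulting zeroth-order quadratic form in $(\rho, v)$ is negative definite (its weight being comparable to $|R'|$, which is where monotonicity, Proposition \ref{lem:R-prop}, enters).

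The dispersive term $k^2\cL_Q\rho$ requires separate care. Using the recast form \eqref{eq:L_Q}, I would pair $\cL_Q\rho$ (coming from the $v$-equation) with $\overline{v}/f_1$ and then use \eqref{eq:rho-final} to trade $v$ for $s\rho' + g\rho + (R'/R)v - \lambda\rho$, so that the leading piece becomes $\tfrac{s}{2}\int \rho''' \overline{\rho'}/f_1 + \cdots$; integrating by parts produces $-\tfrac{s}{2}\int |\rho''|^2 (1/f_1)$ plus lower-order terms weighted by $(1/f_1)'$, $R'/R$, $g$ and their derivatives — again controlled by \eqref{eq:prop-R}, \eqref{allproperties}. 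The term $\Re\int \cL_Q\rho\,\overline{(-\lambda\rho)}/f_1$ contributes $\Re\lambda$ times $\tfrac{1}{2}\int |\rho'|^2/f_1$ plus terms of order $\e|R'|$, so this is the mechanism by which the Bohm potential contributes a positive $|\rho'|^2$-weight to the energy functional — this is the "novel weighted energy" advertised in the introduction, built from $1/f_1$ (viscosity structure) together with the dispersive $|\rho'|^2$ contribution. Similarly, the genuine viscosity term $\mu v''$ paired with $\overline{v}/f_1$ gives $-\mu\int |v'|^2/f_1$ up to lower-order corrections; the cross terms $-2\mu U'\rho'$, $-\mu(R'/R)v'$, $-\mu g'\rho$, $-\mu(R'/R)'v$ are all of order $\e$ or $|R'|$ relative to the good terms and get absorbed via Young's inequality once $\e$ is small enough.

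The main obstacle, and the step I would spend the most effort on, is the bookkeeping of the first-order (in $\rho'$, $v'$) cross terms that do \emph{not} obviously cancel: after the integrations by parts one is left with an indefinite bilinear form $\int \big( \alpha |\rho'|^2 + 2\Re(\beta \rho'\overline{v'}) + \delta |v'|^2 \big)$ with variable coefficients, plus a zeroth-order form, plus the genuinely dispersive $-\tfrac{s}{2f_1}|\rho''|^2$ and viscous $-\tfrac{\mu}{f_1}|v'|^2$ negative terms. One must show the whole thing is negative definite. The strategy is: first use the good negative $|\rho''|^2$ and $|v'|^2$ terms to dominate any \emph{$\e$-small} first-order junk via Cauchy--Schwarz/Young; then handle the remaining $O(1)$ first-order structure — which is essentially the hyperbolic part governed by $a$ and $b$ via the parameter $\eta$ — using the pointwise inequalities \eqref{est_hyperbolic1}--\eqref{est_hyperbolic2} of Lemma \ref{lemma:choice}, which are designed so that a weighted completion of squares (with weight $\eta$) makes the $(\rho', v)$ and $(\rho, v')$ blocks sign-definite. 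The constraint $s < \kappa(\gamma)c_s(R^-)$ from Lemma \ref{lemdest} is what makes $\omega \geq C_3|R'| > 0$, closing the estimate; tracking which terms need $s < 2c_s(R^-)$ versus the sharper $\kappa(\gamma)$ bound, and verifying no term has been double-counted or dropped, is the delicate part. Once the quadratic form is shown to be $\leq -c\int |R'|(|\rho|^2 + |v|^2) - c\int(|\rho''|^2 + |v'|^2)$ for some $c>0$, positivity of the left-side energy (which is $\geq c'\int |\rho'|^2/f_1 > 0$ for $(\rho,v)\not\equiv 0$, again using $\lambda\neq 0$ to rule out the degenerate case) forces $\Re\lambda < 0$, completing the proof.
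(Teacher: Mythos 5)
Your overall skeleton matches the paper's proof: multiply \eqref{eq:v-final} by $v^*/f_1$, use \eqref{eq:rho-final} to produce the cancellations encoded in $a$ and $b$, control the zeroth-order form via Lemma \ref{lemma:choice} and monotonicity, absorb the remaining terms using \eqref{eq:prop-R}, \eqref{eq:est-U} and \eqref{allproperties} for small $\e$, and conclude $\Re\lambda<0$ by contradiction. The genuine gap is in your treatment of the dispersive term. Equation \eqref{eq:rho-final} expresses $v'$, not $v$, as $s\rho'-\lambda\rho+g\rho+(R'/R)v$; the substitution you describe is off by one derivative, and your claimed ``leading piece'' $\tfrac{s}{2}\int\rho'''(\rho')^*/f_1$, hence the negative-definite term $-\tfrac{s}{2}\int|\rho''|^2/f_1$, is an artifact of that slip. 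Carrying out the integrations by parts correctly (as the paper does: move derivatives until $(v'')^*\rho'/f_1$ appears and substitute the \emph{differentiated} identity $v''=s\rho''-\lambda\rho'+g\rho'+(R'/R)v'+g'\rho+(R'/R)'v$), no $|\rho''|^2$ term ever appears: the only good dispersive contribution is the degenerate term $\tfrac{k^2}{2}\int\omega|\rho'|^2$ with $\omega$ as in \eqref{eq:f_a3}, bounded below by Lemma \ref{lemdest} only by $C_3|R'|$, which is of size $O(\e^2)$; compare the final estimate \eqref{EE}, which contains no $|\rho''|^2$ term. Consequently your absorption plan --- dominating the $\e$-small first-order junk with an $O(1)$-weighted $|\rho''|^2$ term --- has nothing to stand on; instead every residual term must be shown to carry a factor $\e$ together with the weight $|R'|$ (or $\e$ against $\int|v'|^2$), so that it can be absorbed by the degenerate good terms $\int|R'||\rho|^2$, $\int|R'||v|^2$, $\int|R'||\rho'|^2$ and $\mu\int|v'|^2/f_1$. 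That bookkeeping is precisely the content of the paper's estimate of the right-hand side $I_2$ and is not automatic.

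A second, related problem: the paper's order of operations also neutralizes the $\lambda$-dependence, since after substituting for $v''$ the eigenvalue enters only through $\Re\int(-\lambda\rho')^*\rho'/f_1=-(\Re\lambda)\int|\rho'|^2/f_1$, which migrates to the left-hand side as part of the energy (this is how the $\tfrac{k^2}{2}\int|\rho'|^2/f_1$ piece of the weighted energy arises). With your premature substitution the term $\lambda\rho$ gets paired against $\rho''$-type quantities, and after integrating by parts you are left with contributions such as $\Re\bigl(\lambda^*\int(1/f_1)'\rho'\rho^*\,dx\bigr)$ that carry an uncontrolled factor $|\lambda|$ (in particular $\Im\lambda$ may be arbitrarily large while $\Re\lambda\ge 0$); these are not ``of order $\e|R'|$'' in any usable sense and cannot be absorbed by smallness of $\e$ alone. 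So the correct mechanism for the Bohm term is the differentiated-equation substitution, the resulting degenerate weight $\omega\ge C_3|R'|$ (which is exactly where the restriction $s<\kappa(\gamma)c_s(R^-)$ enters, as you anticipated), and the $|R'|$-weighted absorption of all residual terms --- not a spectral-gap-type $|\rho''|^2$ coercivity, which this problem does not provide.
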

\begin{proof}
By contradiction, suppose that $\lambda \in \ptsp(\cA)$ with $\lambda \neq 0$ and $\Re \lambda \geq 0$. Then there exists an eigenfunction $(\rho,v) \in \cD(\cL)$ such that equations \eqref{eq:rho-final} and \eqref{eq:v-final} hold. Multiply equation \eqref{eq:v-final} by $v^*/f_1$ and integrate over $\R$. 
The result is
\begin{align}
	\lambda\int_\R\frac{|v|^2}{f_1}\,dx=\int_\R\left[-\rho'v^*+\frac{f_2}{f_1}v'v^*+\frac{\mu}{f_1}v''v^*-\frac{f_2g}{f_1}\rho v^*-\frac{R'f_2}{Rf_1}|v|^2-\frac{2\mu U'}{f_1}\rho'v^*\right.\notag \\
	\left.-\mu\frac{R'}{Rf_1}v'v^*-\mu\frac{g'}{f_1}\rho v^* -\frac\mu{f_1}\left(\frac{R'}{R}\right)'|v|^2+\frac{k^2}{f_1}(\cL_Q\rho)v^*\right]\,dx.\label{eq:util}
\end{align}
Substitute \eqref{eq:rho-final} and integrate by parts in order to deduce the relation
\begin{equation}
\label{util1}
\begin{aligned}
-\Re \int_\R\rho' v^*\,dx&= \Re \int_\R \rho (v')^*\,dx= \Re\int_{\R}\rho\left(s\rho' - \lambda \rho+g\rho+\frac{R'}{R}v\right)^*dx\\
&=s \, \Re \! \int_\R (\rho')^* \rho \, dx - (\Re \lambda) \int_\R |\rho|^2 \, dx +\int_\R g|\rho|^2\,dx+\Re \! \int_{\R}\frac{R'}{R}\rho v^*\,dx\\
&= \frac{s}{2} \int_\R \frac{d}{dx} \big( |\rho|^2\big) \, dx - (\Re \lambda) \int_\R |\rho|^2 \, dx +\int_\R g|\rho|^2\,dx+\Re \! \int_{\R}\frac{R'}{R}\rho v^*\,dx\\
&=- (\Re \lambda) \int_\R |\rho|^2 \, dx+\int_\R g|\rho|^2\,dx+\Re \! \int_{\R}\frac{R'}{R}\rho v^*\,dx.
\end{aligned}
\end{equation}
Similarly,
\begin{equation}\label{eq:a_2}
	\begin{aligned}
	\Re \! \int_{\R} \left[ \frac{f_2}{f_1}v'v^* -\frac{R'f_2}{Rf_1}|v|^2\right] \, dx &= 
	\int_{\R} \left[\frac{f_2}{f_1}\Re(v'v^*) -\frac{R'f_2}{Rf_1}|v|^2\right]\, dx\\
	&= \frac{1}{2} \int_\R \left[ \frac{f_2}{f_1}\frac{d}{dx} \big( |v|^2 \big)-\frac{R'f_2}{Rf_1}|v|^2\right]  \, dx\\
	&= -\int_\R \left[\frac{1}{2} \left(\frac{f_2}{f_1}\right)' +\frac{R'f_2}{Rf_1}\right]|v|^2 \, dx\\
	&=-\int_\R a|v|^2 \, dx,
	\end{aligned}
\end{equation}
where $a$ was introduced in \eqref{eq:f_a2}. Integrate by parts once again to obtain
\begin{equation}
\mu \Re \int_\R \frac{v'' v^*}{f_1}\,dx = -\mu \int_\R \frac{|v'|^2}{f_1}\,dx - \mu \Re \int_\R \left(\frac{1}{f_1}\right)' v^* v' dx.
\label{util2}
\end{equation}
Moreover,
\begin{equation}
\label{eq:b-util}
	\Re\int_\R \left(\frac{R'}{R} - \frac{f_2 g}{f_1}\right) \rho v^*\,dx=\Re\int_\R b\rho v^*\,dx
\end{equation}
where $b$ is defined in \eqref{eq:f_b}. Taking the real part of \eqref{eq:util}, substituting the identities \eqref{util1}, \eqref{eq:a_2}, \eqref{util2}, \eqref{eq:b-util} and rearranging the terms, we arrive at
\begin{equation}
\label{util3}
(\Re \lambda) \int_\R \left[ |\rho|^2 + \frac{|v|^2}{f_1}\right]\, dx + I_1 +\mu \int_\R \frac{|v'|^2}{f_1}\, dx = I_2,
\end{equation}
where
\begin{equation*}
	I_1:= -\int_\R g |\rho|^2\,dx + \int_\R a |v|^2\,dx - \Re \int_\R b \rho v^*\,dx,
\end{equation*}
and
\begin{equation}
\label{eq:beta}
	\begin{aligned}
	I_2 &:=\mu\Re \int_\R\left( -\frac{2U'}{f_1}\rho'v^* -\frac{R'}{Rf_1}v'v^*-\frac{g'}{f_1}\rho v^* -\frac1{f_1}\left(\frac{R'}{R}\right)'|v|^2 \right)\,dx \\
	&\qquad - \mu \Re \int_\R \left(\frac{1}{f_1}\right)' v^* v'\,dx +\Re \int_\R \frac{k^2}{f_1}(\cL_Q \rho)v^*\,dx.
	\end{aligned}
\end{equation}
First, in order to estimate $I_1$, use Young's inequality and obtain
\[
\begin{aligned}
	I_1 &\geq -\int_\R g |\rho|^2\,dx + \int_\R a |v|^2\,dx - \left| \int_\R b \rho v^*\,dx \right| \\
	&\geq -\int_\R g|\rho|^2\,dx + \int_\R a |v|^2\,dx -\frac{1}{2 \eta} \int_\R |b| |\rho|^2\,dx
	-\frac{\eta}{2} \int_\R |b| |v|^2\,dx \\
	&= \int_\R \left( - g - \frac{|b|}{2 \eta} \right) |\rho|^2\,dx + \int_\R\left(a - \frac{\eta |b|}{2}\right)|v|^2\,dx,
\end{aligned}
\]
for any arbitrary $\eta > 0$. By Lemma \ref{lemma:choice}, if $\vep$ is small enough we can choose $\eta> 0$ (depending on $R^-$, $\gamma$ and $s$, but independent of $\varepsilon$) such that
\begin{align}
	I_1&\geq \int_\R \left( - g - \frac{|b|}{2 \eta} \right) |\rho|^2\,dx + \int_\R\left(a - \frac{\eta |b|}{2}\right)|v|^2\,dx \notag\\
	&\geq C_1 \int_\R |R'||\rho|^2\,dx + C_2 \int_\R |R'||v|^2\,dx,
\label{hyperbolic_ineq}
\end{align}
where $C_1, C_2 >0$ do not depend on $\varepsilon$.
Next, let us rewrite the first term in $I_2$ as
\begin{equation}
-\mu\Re \int_\R \frac{2U'}{f_1}\rho' v^*\,dx = 2 \mu \Re \int_\R \left( \frac{U'}{f_1} \right)'\rho v^* \,dx + 2 \mu \Re \int_\R \frac{U'}{f_1}\rho (v')^*\,dx, \label{util5}
\end{equation}
where, once again, we have integrated by parts. Moreover,
\begin{align}
&-\mu \Re \int_\R\left(\left[\left( \frac{1}{f_1} \right)'+ \frac{R'}{R f_1}\right]v^* v'+\frac{1}{f_1}\left( \frac{R'}{R} \right)' |v|^2\right)\,dx \nonumber \\
&\qquad= - \mu \int_\R\left(\left[ \left( \frac{1}{f_1} \right)' + \frac{R'}{R f_1} \right] \Re( v^* v') 
+\frac{1}{f_1}\left( \frac{R'}{R} \right)' |v|^2\right)\,dx \nonumber \\
&\qquad= -\mu\int_\R \left(\frac12\left[\left( \frac{1}{f_1} \right)' + \frac{R'}{R f_1}\right]\frac{d}{dx}(|v|)^2
+\frac{1}{f_1}\left( \frac{R'}{R} \right)' |v|^2\right)\,dx \nonumber \\
&\qquad=\mu\int_\R \left(\frac12\left[ \left( \frac{1}{f_1} \right)' + \frac{R'}{R f_1}\right]' 
-\frac{1}{f_1}\left( \frac{R'}{R} \right)'\right) |v|^2\,dx \nonumber \\
&\qquad= \mu \int_\R \left[\frac{1}{2} \left( \frac{1}{f_1} \right)'' + \frac{1}{2}\left( \frac{R'}{R f_1} \right)'- \frac{1}{f_1}\left( \frac{R'}{R} \right)' \right] |v|^2\,dx. \label{util6}
\end{align}
Because of \eqref{eq:L_Q}, the last term in \eqref{eq:beta} can be written as
\begin{equation*}
\Re \int_\R \frac{k^2}{f_1}(\cL_Q \rho)v^*\,dx
= \frac{k^2}{2} \Re \int_\R \left[\rho''' - \frac{R'}{R}\rho'' - \left( \frac{R'}{R} \right) '\rho' \right] \frac{v^*}{f_1}\,dx.
\end{equation*}
Integration by parts yields
\begin{align}
\Re \int_\R \frac{\rho''' v^*}{f_1}\,dx&= -\Re \int_\R \frac{(v')^* \rho''}{f_1}\,dx 
- \Re \int_\R \left( \frac{1}{f_1} \right)' v^* \rho'' \,dx \notag\\
&=\Re \int_\R \frac{(v'')^* \rho'}{f_1}\,dx+\Re \int_\R \left( \frac{1}{f_1}\right)' (v')^* \rho'\,dx \notag \\ 
&\qquad - \Re \int_\R \left( \frac{1}{f_1} \right)' v^* \rho'' \,dx. \label{disp_term1}
\end{align}
Differentiating the identity \eqref{eq:rho-final}, we obtain
\begin{equation*}
v'' = s \rho'' -\lambda \rho' + g \rho' + \frac{R'}{R}v' + g'\rho + \left( \frac{R'}{R} \right)' v,
\end{equation*}
and, consequently,
\begin{align}
\Re \int_\R \frac{(v'')^* \rho'}{f_1}\,dx&= \Re \int_\R \left[ s \rho'' -\lambda\rho' + g\rho' + \frac{R'}{R}v' + g'\rho + \left( \frac{R'}{R} \right)' v \right]^* \frac{\rho'}{f_1} \,dx \nonumber\\
&= \frac{s}{2} \int_\R \frac{(|\rho'|^2)'}{f_1}\,dx - (\Re \lambda) \int_\R \frac{|\rho'|^2}{f_1}\,dx
+ \int_\R \frac{g}{f_1} |\rho'|^2\,dx \nonumber\\
&\qquad +\Re \int_\R\left[\frac{R'}{R f_1} (v')^* \rho'+ \frac{g'}{f_1}\rho^* \rho'+ \left( \frac{R'}{R} \right)' \frac{v^* \rho'}{f_1}\right]\,dx \nonumber\\
&=-\int_\R \left[\frac{s}{2} \left( \frac{1}{f_1} \right)' - \frac{g}{f_1} \right] |\rho'|^2 \,dx - (\Re \lambda) \int_\R \frac{|\rho'|^2}{f_1}\,dx \nonumber\\
&\qquad +\Re \int_\R\left[\frac{R'}{R f_1} (v')^* \rho'+ \frac{g'}{f_1}\rho^* \rho'+ \left( \frac{R'}{R} \right)' \frac{v^* \rho'}{f_1}\right]\,dx. \label{disp_term2}
\end{align}
Furthermore,
\begin{equation}
\Re \int_\R \left( \frac{1}{f_1} \right)' v^* \rho''\,dx = -\Re \int_\R \left( \frac{1}{f_1} \right)' (v')^* \rho'\,dx
- \Re \int_\R \left( \frac{1}{f_1} \right)'' v^* \rho' \,dx, \label{disp_term3}
\end{equation}
and
\begin{equation}
-\Re \int_\R \frac{R'}{R f_1} v^* \rho'' \,dx = \Re \int_\R \left(\frac{R'}{R f_1}\right)' v^* \rho' \,dx
+\Re \int_\R \frac{R'}{R f_1}(v')^* \rho'\,dx. \label{disp_term4}
\end{equation}
Then, using \eqref{disp_term1}, \eqref{disp_term2}, \eqref{disp_term3} and \eqref{disp_term4}, we obtain
\begin{equation}\label{util7}
\begin{aligned}
2\Re \int_\R (\cL_Q \rho) \frac{v^*}{f_1}\,dx&= -\int_\R \omega |\rho'|^2\,dx - (\Re \lambda) \int_\R \frac{|\rho'|^2}{f_1}\,dx \\
&\qquad + \Re \int_\R \left[\left( \frac{1}{f_1} \right) '' + \left( \frac{R'}{R f_1}\right)' \right] v^* \rho'\,dx \\
&\qquad +2\Re \int_\R \left[\frac{R'}{R f_1} + \left( \frac{1}{f_1}\right)' \right] (v')^* \rho'\,dx+\Re\int_\R\frac{g'}{f_1}\rho^* \rho'\,dx,
\end{aligned}
\end{equation}
where the function $\omega = \omega(x)$ is defined in \eqref{eq:f_a3}. Finally, using \eqref{util5}, \eqref{util6}, multiplying \eqref{util7} by $k^2/2$ and substituting the result in \eqref{util3} and in \eqref{eq:beta}, we obtain

\begin{equation*}
	(\Re \lambda) \int_\R \left[ |\rho|^2 + \frac{|v|^2}{f_1}+\frac{k^2|\rho'|^2}{2f_1}\right]+I_1+\mu\int_\R \frac{|v'|^2}{f_1}+\frac{k^2}{2}\int_\R \omega |\rho'|^2=I_2,
\end{equation*}
where $I_2$ can be recast as
\begin{equation}\label{eq:beta2}
\begin{aligned}
	I_2 &=\mu \int_\R \left[\frac{1}{2} \left( \frac{1}{f_1} \right)'' + \frac{1}{2}\left( \frac{R'}{R f_1} \right)'- \frac{1}{f_1}\left( \frac{R'}{R} \right)' \right] |v|^2\,dx\\
	&\qquad +2 \mu \Re \int_\R \left( \frac{U'}{f_1} \right)'\rho v^* \,dx + 2 \mu \Re \int_\R \frac{U'}{f_1}\rho (v')^*\,dx \\
	&\qquad-\mu \Re \int_\R \frac{g'}{f_1} v^* \rho\,dx +
	\frac{k^2}{2}\Re \int_\R \left[\left( \frac{1}{f_1} \right) '' + \left( \frac{R'}{R f_1}\right)' \right] v^* \rho'\,dx \\
	&\qquad +k^2\Re \int_\R \left[\frac{R'}{R f_1} + \left( \frac{1}{f_1}\right)' \right] (v')^* \rho'\,dx+\frac{k^2}{2}\Re\int_\R\frac{g'}{f_1}\rho^* \rho'\,dx.
\end{aligned}
\end{equation}
Now, using \eqref{hyperbolic_ineq}, we estimate $I_2$ as
\begin{equation}
\label{util8}
\begin{aligned}
	I_2 \geq&(\Re \lambda)\int_\R \left[ |\rho|^2 + \frac{|v|^2}{f_1}+\frac{k^2|\rho'|^2}{2f_1}\right] dx+ C_1 \int_\R |R'| |\rho|^2\,dx + \\
	&\qquad +C_2 \int_\R |R'| |v|^2\,dx+\mu \int_\R \frac{|v'|^2}{f_1}\, dx+\frac{k^2}{2}\int_\R \omega |\rho'|^2\,dx, 
\end{aligned}
\end{equation}
for some uniform $C_1,C_2>0$. To conclude, let us estimate all the terms appearing in \eqref{eq:beta2}: 
first, by using \eqref{eq:prop-R} and \eqref{allproperties}, we deduce
\begin{align*}
	\mu\int_\R \left|\frac{1}{2} \left( \frac{1}{f_1} \right)'' + \frac{1}{2}\left( \frac{R'}{R f_1} \right)'- \frac{1}{f_1}\left( \frac{R'}{R} \right)' \right| |v|^2\,dx
	\leq C \e \int |R'| |v|^2 \,dx,
\end{align*}
for some positive constant $C$, which does not depend on $\e$.
Similarly, thanks to \eqref{eq:prop-R}, \eqref{eq:est-U}, \eqref{allproperties} and Young's inequality, we have
\begin{align*}
	2 \mu\left| \Re \int_\R \left( \frac{U'}{f_1} \right)'\rho v^* \,dx\right|&\leq C\e \int_\R |R'| |v|^2\,dx  + C \e \int_\R |R'| |\rho|^2\,dx, \\
	2 \mu\left|\Re \int_\R \frac{U'}{f_1}\rho (v')^*\,dx\right|
	&\leq \frac{C}{\e} \int_\R |R'| |v'|^2\,dx  + C\e \int_\R |R'| |\rho'|^2\,dx \\
	&\leq C \e\int_\R |v'|^2\,dx+C \e\int_\R |R'| |\rho'|^2\,dx.
\end{align*}
Here and in what follows $C>0$ is a constant, which does not depend on $\e$ and whose value may change from line to line.
Apply estimates \eqref{allproperties} to obtain
\begin{equation*}
	\mu\left|\Re \int_\R \frac{g'}{f_1} v^* \rho\,dx\right|\leq C \e \int_\R |R'| |v|^2\,dx  + C \e\int_\R |R'| |\rho|^2\,dx.
\end{equation*}
Finally, apply Young's inequality, and by Proposition \ref{lem:R-prop} and Lemma \ref{lem:f_1,g} as before, we deduce
\begin{align*}
	\frac{k^2}{2}\left|\Re \int_\R \left[\left( \frac{1}{f_1} \right)''+\left( \frac{R'}{R f_1}\right)' \right] v^* \rho'\,dx\right|
	&\leq C\e\int_\R |R'| |v|^2\,dx +C \e\int_\R |R'| |\rho'|^2\,dx,\\
	k^2\left|\Re \int_\R \left[\frac{R'}{R f_1} + \left( \frac{1}{f_1}\right)' \right] (v')^* \rho'\,dx\right|
	&\leq \frac{C}{\e}\int_\R |R'| |v'|^2\,dx  + C\e\int_\R |R'| |\rho'|^2\,dx,\\
	&\leq C\e \int_\R |v'|^2\,dx  + C \e\int_\R |R'| |\rho'|^2\,dx,\\
	\frac{k^2}{2}\left|\Re\int_\R\frac{g'}{f_1}\rho^* \rho'\,dx\right|&\leq C \e\int_\R |R'| |\rho|^2\,dx+C\e \int_\R |R'| |\rho'|^2\,dx.
\end{align*}
Substituting all these inequalities in \eqref{util8} and using \eqref{allproperties}, \eqref{eq:a_3-order}, we end up with
\begin{equation}
\label{EE}
\begin{aligned}
	(\Re \lambda)\int_\R \left[ |\rho|^2 + \frac{|v|^2}{f_1}+\frac{k^2|\rho'|^2}{2f_1}\right] dx &+ \left( C_1 -C \e\right)\int_\R |R'||\rho|^2\,dx\\
	&+\left(C_2-C \e\right) \int_\R |R'| |v|^2\,dx\\
	&+\left( \frac{C_3 k^2}{2} -C\e\right) \int_\R |R'| |\rho'|^2\,dx\\
	&+ \left(C_1 \mu - C\e\right) \int_\R |v'|^2\,dx \, \leq 0,
\end{aligned}
\end{equation}
for uniform constants $C_j > 0$ independent of $\vep$. This is a contradiction with $\Re \lambda \geq 0$ if $\e>0$ is small enough. We conclude that $\Re \lambda < 0$ and the lemma is proved.
\end{proof}

\begin{remark}
\label{remkeyenergy}
From the energy estimate \eqref{EE} one identifies that, under the current assumptions, the appropriate energy for the perturbations is precisely 
\[
E = \| \rho \|_{L^2}^2 + \| U(x) \rho + R(x) u \|_{L_{w}^2}^2 + \tfrac{1}{2}k^2 \| \rho_x \|_{L_{w}^2}^2,
\]
in terms of weighted Sobolev norms with weight function $w(x) = 1/\sqrt{f_1(x)}$ (notice that $f_1$ is strictly positive because of \eqref{allproperties}). For the evolution problem, with perturbations depending on time, the spectral estimate \eqref{EE} suggests that $dE/dt \leq 0$.
\end{remark}

\begin{corollary}
\label{corptsp}
Under the assumptions $\gamma > 1$, $R^- > 0$  and $s\in(0, \kappa(\gamma)c_s(R^-))$, if the shock amplitude is sufficiently small then the point spectrum of the linearized operator around the wave is stable, $\ptsp(\cL) \subset \{ \Re \lambda > 0 \} \cup \{ 0 \}$.
\end{corollary}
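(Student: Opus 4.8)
The plan is to obtain the corollary immediately as a consequence of the two structural results already established, with essentially no further analysis required. The entire analytic content has been carried by the energy estimate of Lemma \ref{lemeest}, which under the standing hypotheses ($\gamma > 1$, $R^- > 0$, $s \in (0, \kappa(\gamma) c_s(R^-))$, and sufficiently small amplitude $\e = R^- - R^+$) shows that the auxiliary integrated operator $\cA$ is point spectrally stable, namely $\ptsp(\cA) \subset \{ \lambda \in \C : \Re \lambda < 0 \}$. First I would invoke this lemma to place every point eigenvalue of $\cA$ strictly in the open left half plane.

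Next I would transfer this information back to the original operator $\cL$ using Lemma \ref{lemequivsp}, which asserts $\ptsp(\cL) \backslash \{ 0 \} \subset \ptsp(\cA)$. Concatenating the two facts gives $\ptsp(\cL) \backslash \{ 0 \} \subset \ptsp(\cA) \subset \{ \Re \lambda < 0 \}$, so any nonzero point eigenvalue of $\cL$ satisfies $\Re \lambda < 0$. To close the argument I would adjoin the translation eigenvalue: by Remark \ref{remzeroevalue}, $\lambda = 0 \in \ptsp(\cL)$ with eigenfunction $(R', U')^\top$, and it is precisely this mode that the integrated formulation \eqref{intvar} removes. Hence the stated inclusion for $\ptsp(\cL)$ follows, namely $\ptsp(\cL) \subset \{ \Re \lambda < 0 \} \cup \{ 0 \}$ (the displayed inequality should be read with $\Re \lambda < 0$, consistent with Lemma \ref{lemeest}, the abstract, and the Main Theorem; the printed ``$>$'' is a typographical slip).

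No substantial obstacle arises at this stage. The only subtlety worth confirming — and one already handled inside the proof of Lemma \ref{lemequivsp} — is that for $\lambda \neq 0$ the antiderivatives $\rho$ and $u$ defined in \eqref{intvar} decay exponentially as $|x| \to \infty$ and therefore belong to $L^2(\R)$, so that passing between $\cL$ and $\cA$ via the integration and the invertible map $\Gamma$ neither creates nor destroys any nonzero eigenvalue. The sole eigenvalue lost in the integration is the translation mode at the origin, which is reinstated by hand. Thus the corollary reduces to simply chaining Lemmata \ref{lemeest} and \ref{lemequivsp} and accounting for $\lambda = 0$.
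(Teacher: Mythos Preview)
Your proposal is correct and follows exactly the paper's approach: the paper's proof is the single sentence ``It follows immediately from Lemma \ref{lemequivsp} and the energy estimate from Lemma \ref{lemeest},'' which is precisely the chain $\ptsp(\cL)\setminus\{0\}\subset\ptsp(\cA)\subset\{\Re\lambda<0\}$ you spell out. Your observation that the displayed ``$\Re\lambda>0$'' is a typographical slip for ``$\Re\lambda<0$'' is also correct.
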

\begin{proof}
It follows immediately from Lemma \ref{lemequivsp} and the energy estimate from Lemma \ref{lemeest}.
\end{proof}

We are now able to prove our main result.

\begin{theorem}[spectral stability of weak dispersive shock profiles]
\label{mainthm}
Suppose that $\gamma > 1$, $R^- > 0$ and $s \in (0, \bar{s})$, where
\begin{equation}
\label{sbound}
\bar{s} := \min \left\{ 2c_s(R^-), \kappa(\gamma) c_s(R^-)\right\}.
\end{equation}
Then there exists a sufficiently small $\bar{\vep} > 0$ such that, if the shock amplitude $\vep = R^- - R^+$ satisfies $0 < \vep < \bar{\vep}$ then the dispersive shock profile is spectrally stable.
\end{theorem}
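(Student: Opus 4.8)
The plan is to obtain the theorem by assembling the two spectral components that have already been controlled, together with Weyl's partition of the spectrum recalled in Remark~\ref{remspectra}. Since $\cL$ is closed and densely defined on $L^2(\R)\times L^2(\R)$, one has $\sigma(\cL)=\ess(\cL)\cup\ptsp(\cL)$, so it suffices to place each piece in the stable half plane, allowing for the translation eigenvalue at the origin.

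First I would check that the hypotheses are mutually compatible. The speed restriction $s\in(0,\bar s)$ with $\bar s=\min\{2c_s(R^-),\kappa(\gamma)c_s(R^-)\}$ is tailored so that simultaneously $s\in(0,2c_s(R^-))$ --- the bound under which Lemma~\ref{lemessspect} gives stability of the essential spectrum --- and $s\in(0,\kappa(\gamma)c_s(R^-))$ --- the bound required by the coefficient estimate of Lemma~\ref{lemdest}, hence by the energy estimate of Lemma~\ref{lemeest} and by Corollary~\ref{corptsp} for the point spectrum. Next I would fix a single smallness threshold: take $\bar\vep:=\min\{\vep_0,\vep_1,\vep_2\}$, where $\vep_0$ is the monotonicity threshold of Proposition~\ref{lem:R-prop}, $\vep_1$ the threshold from Lemma~\ref{lemessspect}, and $\vep_2$ the threshold from Corollary~\ref{corptsp}. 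For $0<\vep=R^--R^+<\bar\vep$ the profile then exists and is monotone decreasing (Proposition~\ref{lem:R-prop}), $\ess(\cL)\subset\{\Re\lambda<0\}\cup\{0\}$ (Lemma~\ref{lemessspect}), and $\ptsp(\cL)\subset\{\Re\lambda<0\}\cup\{0\}$ (Corollary~\ref{corptsp}).

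Combining these inclusions yields $\sigma(\cL)=\ess(\cL)\cup\ptsp(\cL)\subset\{\Re\lambda<0\}\cup\{0\}$, which is exactly spectral stability in the sense of the definition; the translation eigenvalue $\lambda=0$ of Remark~\ref{remzeroevalue} is already absorbed by the ``$\cup\{0\}$''. Finally, the case $s<0$ reduces to the case $s>0$ by the reflection $x\mapsto-x$ described in Remark~\ref{remonlyspos}. In truth there is no genuine obstacle left at this stage --- the substantive difficulty (the novel weighted energy functional, the monotonicity of weak profiles, and the sign conditions on the coefficients) has already been overcome in Sections~\ref{secproperties} and~\ref{sec:spectral}; the only point requiring care in the final argument is precisely the compatibility of the several smallness thresholds and speed bounds, which the definition of $\bar s$ and the choice of $\bar\vep$ are designed to guarantee.
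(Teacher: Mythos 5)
Your proposal is correct and follows essentially the same route as the paper: the published proof likewise invokes the speed bound \eqref{sbound} to make Lemma \ref{lemessspect} and Corollary \ref{corptsp} simultaneously applicable, shrinks $\bar\vep$ so that both hold, and concludes via $\sigma(\cL)=\ess(\cL)\cup\ptsp(\cL)$. Your explicit bookkeeping of the several smallness thresholds and the $s<0$ reflection is a slightly more careful write-up of the same argument.
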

\begin{proof}
Since the shock speed satisfies the bound \eqref{sbound} then it suffices to choose $\bar{\vep} > 0$ sufficiently small such that the conditions of Lemma \ref{lemessspect} and of Corollary \ref{corptsp} hold. We conclude that $\sigma(\cL) \subset \{ \lambda \in \C \, : \, \Re \lambda < 0\} \cup \{0 \}$, as claimed.
\end{proof}

\begin{remark}
\label{remnonlineargood}
It is to be observed that the condition \eqref{sbound} on the shock speed is the only assumption needed to conclude the spectral stability of sufficiently weak dispersive shock profiles. For instance, if $\gamma \geq 3$ then \eqref{sbound} is equivalent to $s \in (0, 2c_s(R^-))$ and we have stability of both essential and point spectra (notice that $\kappa(\gamma) = 2c_s(R^-)$ for $\gamma \geq 3$). The inequality $s < 2c_s(R^-)$ is equivalent to subsonicity of both end states, $|u^\pm| < c_s(R^\pm)$, which is physically meaningful. However, if $1 < \gamma < 3$ then a stronger condition is needed to conclude the stability of the point spectrum, namely, $s \in (0, \kappa(\gamma) c_s(R^-))$. This contrasts with the equivalent condition for the shock speed in the case with linear viscosity, for which we need $s \in (0, \min \{ 2c_s(R^-), \tfrac{1}{2}(\gamma+1)c_s(R^-))$ in order to conclude stability (see \cite{FPZ-press}). It is remarkable that the nonlinear viscosity allows a larger set of values for the shock speed to admit stability of weak profiles; see Figure \ref{figcomparison}. Still, and like in the linear viscosity case, we conjecture that condition \eqref{sbound} is merely a technical assumption needed in the method of proof and that sufficiently weak shock profiles are spectrally stable even for $\kappa(\gamma) c_s(R^-) \leq s < 2c_s(R^-)$ if $1 < \gamma < 3$, as numerical calculations of the point spectrum show (see \cite{LaZ21b}).

\begin{figure}[t]
\begin{center}
\includegraphics[scale=.6, clip=true]{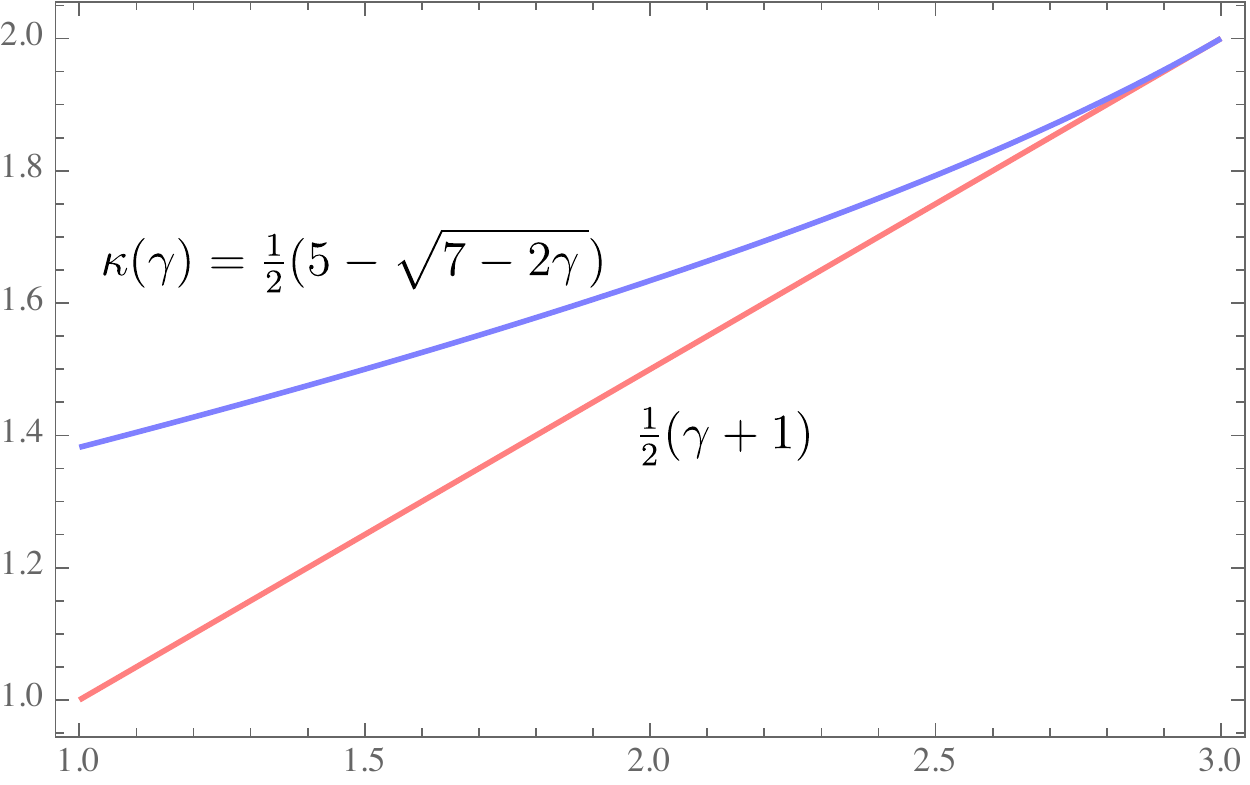}
\end{center}
\caption{\small{Comparison between the upper bounds for the shock speed that allow spectral stability, namely, $s \in (0, \kappa(\gamma) c_s(R^-))$ for the system with nonlinear viscosity under consideration, and $s \in (0, \tfrac{1}{2}(\gamma+1)c_s(R^-))$ for the associated QHD system with \emph{linear} viscosity (see \cite{FPZ-press}). The graphs (in red for the linear case, blue for the nonlinear one) show the constants, depending on the parameter $\gamma \in (1,3)$, that determine the upper bound. Notice that the nonlinear viscosity coefficient allows a larger set of shock speed to admit stability (color online).}}
\label{figcomparison}
\end{figure}

\end{remark}

\section{Discussion and open problems}

In this work, we have proved that subsonic dispersive shock profiles for system \eqref{QHD-E} with Bohmian dispersion and nonlinear viscosity are spectrally stable provided that their amplitudes are sufficiently small. In other words, the linearized operator around a sufficiently weak viscous dispersive shock profile has stable $L^2$-spectrum located in the complex half plane with negative real part (except for the eigenvalue zero). Following previous works on spectral stability of purely viscous \cite{HuZ02}, relaxation \cite{Hu03,MZ09}, or visco-capillar shocks \cite{Hu09,FPZ-press}, we establish this result by performing energy estimates at the spectral level, that is, on the spectral equations for the perturbations. In particular, we implemented a novel weighted energy function that takes into account both the nonlinear viscosity term and the quantum Bohm potential that appears in the dispersive term. Our results extend our previous analysis in the case of QHD systems with linear viscosity \cite{FPZ-press} and it is compatible with the numerical evidence of the stability of dispersive profiles with nonlinear viscosity provided by Lattanzio and Zhelyazov \cite{LaZ21b}.

This analysis can be regarded as the first step of a more general program, that aims at proving the nonlinear stability of sufficiently weak dispersive shock profiles. This is a problem that, up to our knowledge, remains open in the context of QHD systems (as far as we know, the only works that study the nonlinear stability of small-amplitude, monotone, viscous dispersive shock profiles are those of Zhang \emph{et al.} \cite{ZLY16} and the thesis by Khodja \cite{KhdPhD89}, for compressible fluids of Korteweg type with constant viscosity and capillarity coefficients). Due to the absence of a spectral gap (a feature shared by purely viscous shocks) this problem requires a detailed analysis of the semigroup generated by the linearized operator, either for the application of pointwise Green's function bounds just like in the purely viscous case \cite{ZH98,MZ03} (see, for example, the analysis of Howard and Zumbrun \cite{HZ00} for viscous-dispersive \emph{scalar} equations; the systems case remains open, as far as we know), or for adopting a more classical PDE approach (cf. \cite{PaW04,ZLY16}).

Finally, it is worth mentioning that monotonicity is a crucial property that links our analysis in the dispersive case with the classical methods implemented for purely viscous shock profiles. When the shock amplitude increases and the dispersive term plays a more significant role, the profiles are no longer monotone and begin to exhibit an oscillatory behavior. The numerical (spectral) analysis of \cite{LaZ21b}, however, applies also to non-monotone shocks and constitute evidence of their stability. Proving analytically the stability of viscous dispersive shocks beyond the small-amplitude regime is a particularly difficult open problem. Partial results that consider \emph{weak} oscillations are available \cite{KhdPhD89}, but the stability of large-amplitude (and hence highly oscillating) shocks, or the onset for instability based on the shock amplitude, are far from being understood. Thanks to the ``stabilizing'' properties of the nonlinear viscosity term (see Remark \ref{remnonlineargood}), we believe that the stability of oscillatory shock profiles for the QHD system under consideration in this paper (or an equivalent system with nonlinear viscosity for compressible fluids, for that matter) is a topic of research that deserves further examination.

\section*{Acknowledgements}
The authors are grateful to Corrado Lattanzio for useful conversations. The work of D. Zhelyazov was supported by a Post-doctoral Fellowship by the Direcci\'{o}n General de Asuntos del Personal Acad\'{e}mico (DGAPA), UNAM. The work of R. Folino was partially supported by DGAPA-UNAM, program PAPIIT, grant IA-102423.
The work of R. G. Plaza was partially supported by DGAPA-UNAM, program PAPIIT, grant IN-104922.

\appendix
\section{Coefficient estimates}
\label{apendice}

\subsection{Proof of Lemma \ref{lem:f_1,g}}

Thanks to the definition of the enthalpy \eqref{eq:ent} and equality \eqref{eq:U-R} let us rewrite $f_1$ and $g$ as
\begin{align}
	f_1(x)&=\gamma R(x)^{\gamma-1}-\left(s-\frac{A}{R(x)}\right)^2=F_1(A,R(x)),\label{eq:f1}\\
	g(x)&=\frac{AR'(x)}{R^2(x)}-\frac{sR'(x)}{R(x)}+\frac{AR'(x)}{R^2(x)}=G(A,R(x))R'(x),\label{eq:g}
\end{align}
where
\[
	F_1(A,R):= \gamma R^{\gamma-1} - \left(s-\frac{A}{R}\right)^2,\quad G(A,R):= \frac{2A-sR}{R^2}.
\]
Using \eqref{expansion_A}, let us define $A^- := R^- c_s(R^-)$, so that $A = A^- + O(\e)$ and we have $F_1(A^-,R^-)=2sc_s(R^-)-s^2=s(2c_s(R^-)-s)>0$, if $s \in (0, 2 c_s(R^-))$.
Therefore, if we choose $\e = R^- - R^+>0$ sufficiently small then $f_1(x) \geq\theta_1> 0$, for any $x\in\R$ and some $\theta_1> 0$. 
Moreover, since $f_1$ has finite limits as $x \to \pm \infty$ and $R \in [R^{+}, R^{-}]$ 
we also obtain $\theta_1^{-1} \geq f_1(x) \geq\theta_1$ for some uniform $\theta_1\in(0,1)$ and any $x \in \R$. 

On the other hand, since $f'_1(x)=\partial_RF_1(A,R(x))R'(x)$, with $R'(x)<0$ for any $x\in\R$, we obtain
$|f'_1(x)|\leq|\partial_RF_1(A,R(x))||R'(x)|\leq\theta_2|R'(x)|$ for any $x\in\R$, provided $\e>0$ is small, where the constant $\theta_2>0$ is the maximum of $|\partial_R F_1(A,R)|$ in a small neighborhood of $(A^-,R^-)$
(we again use that $A=A^-+O(\e)$ and $R(x)\in[R^+,R^-]$, for any $x\in\R$).

Similarly, it suffices to compute $f''_1$ and apply inequalities \eqref{eq:prop-R} to obtain $|f_1''(x)| \leq \theta_3 \vep |R'(x)|$ for some uniform $\theta_3 > 0$, provided that $\e$ is sufficiently small. 
	
Regarding the function $g$, since
$$G(A^-,R^-)=\frac{2 c_s(R^-) -s}{R^-}>0,$$
if $s \in (0, 2 c_s(R^-))$, we can choose $\e>0$ sufficiently small so that
$G(A,R(x))\geq\theta_4$ for any $x\in\R$ and 
for some $\theta_4 > 0$ independent on $\e$. This implies $g(x)\leq \theta_4 R'(x)<0$.
Upon differentiation we infer
$g'(x)=G(A,R(x))R''(x)+\partial_RG(A,R(x))R'(x)^2$,
and from \eqref{eq:prop-R} we deduce that $|g'(x)| \leq \theta_5 \vep |R'(x)|$ for some uniform $\theta_5 > 0$.
Finally, one may choose a uniform constant $C \geq \max \{ \theta_1^{-1}, \theta_2, \theta_3, \theta_4^{-1}, \theta_5 \} > 0$, independent of $\vep$, to get the result.
\qed

\subsection{Proof of Lemma \ref{lemma:choice}}
First of all, thanks to the equality \eqref{eq:U-R} let us rewrite $f_2$ as
\begin{equation}
\label{eq:f2}
	f_2(x):=-s+\frac{2A}{R(x)}=F_2(A,R(x)),
\end{equation}
where $F_2(A,R):= -s+\frac{2 A}{R}$. Consequently, substituting \eqref{eq:f1}, \eqref{eq:g} and \eqref{eq:f2} in the definitions of $a$ and $b$, \eqref{eq:f_a2} and \eqref{eq:f_b}, we obtain
\begin{equation*}
	a(x)=\frac{f_2'(x)f_1(x)-f_2(x)f'_1(x)}{2f_1(x)^2}+\frac{R'(x) f_2(x)}{R(x) f_1(x)}=- H(A,R(x))R'(x),
\end{equation*}
with
\begin{equation*}
	H(A,R) = -\frac{\partial_R F_2(A,R) F_1(A,R) - F_2(A,R) \partial_R F_1(A,R)}{2F_1(A,R)^2} - \frac{F_2(A,R)}{R F_1(A,R)},
\end{equation*}
and 
\begin{equation*}
	b(x)=R'(x)\left[\frac{1}{R(x)}-\frac{f_2(x)G(A,R(x))R'(x)}{f_1(x)}\right]=R'(x)B(A,R(x)),
\end{equation*}
where
\begin{equation*}
	B(A,R) = \frac{1}{R} - \frac{F_2(A,R) G(A,R)}{F_1(A,R)}.
\end{equation*}
Similarly to the proof of \eqref{allproperties}, we profit from expansion \eqref{expansion_A} and define $A^- := R^- c_s(R^-)$, so that $A = A^- + O(\e)$, and let $F_2(A^-,R^-):= -s+2c_s(R^-)=F_1(A^-,R^-)/s$. Let us compute
\begin{align*}
	G(A^-,R^-)&=\frac{2 c_s(R^-) -s}{R^-}=:q_1,\\
	H(A^-,R^-)&=\frac{c_s(R^-)^2(\gamma+1) - 4 c_s(R^-) s + 2 s^2}{2 R^- s^2 (2 c_s(R^-) - s)}=:q_2,\\
	B(A^-,R^-)&= - \frac{2(c_s(R^-) - s)}{R^- s}=:q_3.
\end{align*}
Since $s \in (0, 2 c_s(R^-))$, we have $q_1 > 0$. 
Moreover, since $\gamma > 1$, we deduce
\[
c_s(R^-)^2(\gamma+1) - 4 c_s(R^-) s + 2 s^2 > 2 c_s(R^-)^2 -4 c_s(R^-) s + 2 s^2 = 2(c_s(R^-) - s)^2 \geq 0;
\]
therefore, the condition $s \in (0, 2 c_s(R^-))$ implies $q_2 > 0$ as well.
	
We are now ready to prove \eqref{est_hyperbolic1} and \eqref{est_hyperbolic2}. Indeed, since $R'(x)<0$ for any $x\in\R$, one has
\begin{align*}
	-g(x) - \frac{|b(x)|}{2 \eta}&= -G(A,R(x))R'(x) - \frac{|R'(x)B(A,R(x))|}{2 \eta}\\
	&= \left(G(A,R(x)) - \frac{|B(A,R(x))|}{2 \eta}\right)|R'(x)|,
\end{align*}
and
\begin{align*}
	a(x) - \frac{\eta}{2}|b(x)|&= -H(A,R(x))R'(x) - \frac{\eta}2|R'(x)B(A,R(x))|\\
	&= \left(H(A,R(x)) - \frac{\eta}2|B(A,R(x))|\right)|R'(x)|.
\end{align*}
We claim that one can choose $\eta>0$ such that 
\begin{align}
	G(A^-,R^-) - \frac{|B(A^-,R^-)|}{2 \eta}&=q_1 - \frac{|q_3|}{2 \eta}> 0, \label{ineq_eta_1}\\
	H(A^-,R^-) - \frac{\eta}2|B(A^-,R^-)|&=q_2 - \frac{\eta}{2}|q_3|> 0. \label{ineq_eta_2}
\end{align}
Inequalities \eqref{ineq_eta_1} and \eqref{ineq_eta_2} guarantee \eqref{est_hyperbolic1} and \eqref{est_hyperbolic2} for $\e>0$ sufficiently small. Indeed, thanks to $R^+=R^--\e$ and $A=A^-+O(\e)$ we can state that there exists $\e_0>0$ small enough such that if $\e\in(0,\e_0)$, then
\begin{align}
	G(A,R(x)) - \frac{|B(A,R(x))|}{2 \eta}&\geq C_1, \label{eq:end1}\\
	H(A,R(x)) - \frac{\eta}2|B(A,R(x))|&\geq C_2, \label{eq:end2}
\end{align}
for any $x\in\R$, with $C_1,C_2>0$ and independent on $\e$.
Thus, it remains to prove that \eqref{ineq_eta_1} and \eqref{ineq_eta_2} hold true for some $\eta>0$.
Notice that if $s=c_s(R^-)$ then $q_3=0$, \eqref{ineq_eta_1} and \eqref{ineq_eta_2} hold true for any $\eta>0$, because we already proved that $q_1>0$ and $q_2>0$.
Hence, if $s=c_s(R^-)$, for any $\eta>0$ there exists $\e_0>0$ small enough such that if $\e\in(0,\e_0)$ then \eqref{eq:end1} and \eqref{eq:end2} hold true for any $x\in\R$.

On the other hand, if $s\neq c_s(R^-)$, the inequalities \eqref{ineq_eta_1} and \eqref{ineq_eta_2} are equivalent to
\begin{equation*}
	\frac{|q_3|}{2 q_1} < \eta < \frac{2 q_2}{|q_3|},
\end{equation*}
and, therefore, we only need to prove that
\begin{equation}\label{ineq_eta_3}
	\frac{|q_3|}{2 q_1} < \frac{2 q_2}{|q_3|}.
\end{equation}
The latter inequality is equivalent to $q_3^2<4q_1q_2$, that is
\[
\frac{4(c_s(R^-) - s)^2}{(R^- s)^2}<4\left[\frac{2c_s(R^-) -s}{R^-}\right]\frac{c_s(R^-)^2(\gamma+1) - 4 c_s(R^-) s + 2 s^2}{2 R^- s^2 (2 c_s(R^-) - s)}.
\]
Simplifying this expression we obtain
\[
(c_s(R^-) - s)^2<\frac{c_s(R^-)^2(\gamma+1) - 4 c_s(R^-) s + 2 s^2}2,
\]
namely $c_s(R^-)^2 < \tfrac{1}{2} c_s(R^-)^2(\gamma+1)$.
Therefore, if $\gamma>1$ then \eqref{ineq_eta_3} holds true.
In conclusion, we showed that for any fixed
\[
\eta\in\left(\frac{|q_3|}{2 q_1},\frac{2 q_2}{|q_3|}\right),
\]
there exists $\e_0>0$ sufficiently small such that if $\e\in(0,\e_0)$, then \eqref{eq:end1} and \eqref{eq:end2} hold true for any $x\in\R$,
and the proof is complete.
\qed

\subsection{Proof of Lemma \ref{lemdest}}
Let us proceed as in the proof of Lemma \ref{lemma:choice}.
Substituting \eqref{eq:f1} and \eqref{eq:g} in \eqref{eq:f_a3}, we deduce
\[
	\omega(x)=-\frac{s\partial_R F_1(A,R(x))R'(x)}{2F_1(A,R(x))^2} \frac{G(A,R(x))R'(x)}{F_1(A,R(x))} = D(A,R(x))R'(x),
\]
where
\begin{align*}
	D(A,R):=\frac{1}{2 R F_1(A,R)^2}&\left[ - s R \left( \gamma(\gamma-1)R^{\gamma-2}- 2 \left( s - \frac{A}{R} \right) \frac{A}{R^2} \right)\right.\\
	&\qquad\qquad \left. + 2 F_1(A,R) \left(s - \frac{2 A}{R} \right)\right].
\end{align*}
After some algebra we arrive at
\begin{equation*}
	D(A^-,R^-) = - \frac{2 s^2 - 10 c_s(R^-) s + (\gamma + 9) c_s(R^-)^2}{2 R^- s (2 c_s(R^-) - s)^2},
\end{equation*}
where $A^-=R^-c_s(R^-)$ as before.
Notice that the denominator is positive for $s \in (0, 2 c_s(R^-))$, while the numerator is a quadratic function of $s$. 
Let us consider two cases: for $1 < \gamma < 3$, we have
\[
\Big{\{} s \in (0, 2 c_s(R^-)) : 2 s^2 - 10 c_s(R^-) s + (\gamma + 9)  c_s(R^-)^2 > 0 \Big{\}} = \left(0,\kappa(\gamma)c_s(R^-) \right).
\]
On the other hand, if $\gamma \geq 3$, then the numerator is positive for any $s \in (0, 2 c_s(R^-))$. 
Hence, for $s \in (0,\kappa(\gamma)c_s(R^-))$, we have $D(A^-,R^-)<0$. 
Therefore, if $\e > 0$ is sufficiently small, then there exists a constant $C_3 > 0$ (independent of $\e$), such that $D(A,R(x)) \leq -C_3$ for any $x\in\R$.
Since $R'(x) < 0$, for any $x\in\R$ we conclude that 
$$\omega(x)=D(A,R(x)) R'(x) \geq C_3 |R'(x)|,$$ 
for any $x \in \R$ and the proof is complete.
\qed

\def\cprime{$'\!\!$}\def\cprime{$'\!\!$}\def\cprime{$'\!\!$}








%
%
%
%
%
%

\end{document}